\documentclass[11pt,reqno]{amsart}
\usepackage{amscd}
\usepackage{amsfonts}
\usepackage{amsmath,amsthm,hyperref}
\usepackage[all]{xy}
\usepackage{amsmath,amsthm,hyperref}
\usepackage{amsmath,amssymb,amsthm,latexsym}
\usepackage{color}
\usepackage{amsmath}
\usepackage{bm}
\usepackage{dsfont}
\usepackage{changes}
\usepackage{enumerate}
\numberwithin{equation}{section}
\usepackage{ulem}
\newtheorem{theorem}{Theorem}[section]
\newtheorem{proposition}[theorem]{Proposition}

\newtheorem{corollary}[theorem]{Corollary}
\newtheorem{lemma}[theorem]{Lemma}
\newtheorem{remark}[theorem]{Remark}
\newtheorem{conjecture}[theorem]{Conjecture}

\usepackage{caption}
\usepackage{float}
\usepackage{graphicx}
\usepackage{CJK}
\usepackage{amsmath,amsfonts,mathrsfs,amssymb}
\usepackage{indentfirst}
\usepackage{caption}

\usepackage{multicol}
\usepackage{longtable}
\usepackage[numbers,sort&compress]{natbib}
\usepackage{xcolor}
\usepackage{indentfirst}
\usepackage{amscd}
\usepackage{amsfonts}
\usepackage{amsmath,amsthm,hyperref}
\usepackage[all]{xy}
\usepackage{amsmath,amsthm,hyperref}
\usepackage{amsmath,amssymb,amsthm,latexsym}
\usepackage{amscd}
\theoremstyle{remark}

\newcommand{\R}{\mathbb{R}}

\newcommand{\SB}{\mathbb{S}}

\newcommand{\CC}{\mathcal{T}^n}
\newcommand{\Z}{\mathbb{Z}}

\newcommand{\dd}{\mathrm d} 

\newcommand{\blue}{\textcolor{blue}}

\newcommand{\HT}{\CJKfamily{hei}}

\newcommand{\W}{\mathcal{W}}

\makeatletter

\newcommand*{\innerproduct}[2]{%
  \if@display
    \left\langle #1,#2\right\rangle
  \else
    \langle #1,#2\rangle
  \fi
}

\renewcommand{\@settitle}{%
  \begin{center}
    \baselineskip 18\p@\relax
    \Large\bfseries
    \@title
  \end{center}%
}

\makeatother
    
\begin{document}

\title{
On the Willmore energy of flat $n$-tori in $\mathbb{R}^N$ and Chen's conjecture for $n$-tori
}
\author{
Ruijie Ni, Peng Wang, Zhenxiao Xie}
\address{School of Mathematics and Statistics, FJKLAMA, Key Laboratory of Analytical Mathematics and Applications (Ministry of Education), Fujian Normal University, Fuzhou, China} \email{niruijie721@sina.com}

\address{School of Mathematics and Statistics, FJKLAMA, Key Laboratory of Analytical Mathematics and Applications (Ministry of Education), Fujian Normal University, Fuzhou, China}
\email{pengwang@fjnu.edu.cn, netwangpeng@163.com}

\address{School of Mathematical Sciences, Beihang University, Beijing 102206, China} 
\email{xiezhenxiao@buaa.edu.cn}
\maketitle

  \begin{abstract}
 This paper establishes the sharp lower bound $(4n\pi^2)^{n/2}$ for the Willmore energy $\mathcal{W}$ of flat $n$-tori in the Euclidean space. Up to M\"obius transformations, the Clifford $n$-torus $\SB^1\bigl(\sqrt{1/n}\,\bigr) \times \cdots \times \SB^1\bigl(\sqrt{1/n}\,\bigr) \subset \SB^{2n-1} \subset \mathbb{R}^{2n}$ is shown to be the unique minimizer attaining this bound. This also confirms Chen's conjecture for flat $n$-tori. However, when $n 
 \geq3 $, we show that Chen's conjecture fails on the total mean curvature of general immersed $n$-tori: certain M\"{o}bius transformations of the Clifford $n$-torus strictly decrease the  total mean curvature. 
  \end{abstract}

 
  {\bf{Keywords}}: 
  Willmore energy;~Flat $n$-tori;~ Minimal submanifolds;~ Chen's conjecture;~  M\"{o}bius geometry.
  \vspace{2mm}

 {\bf MSC 2020: }  53A31, 53A10, 53C42, 53C40  \vspace{2mm}

\section{Introduction}
Conformal geometry is an important branch of differential geometry. The Willmore energy is a basic conformally invariant curvature functional for submanifolds. Its lower-bound estimates, as a natural problem in global differential geometry, have received much attention.

For a closed surface $M^2$ in the Euclidean space $\mathbb R^N$, the Willmore energy is defined by
\[\widetilde{\mathcal W}(M^2)=\int_{M^2} |H|^2\,\mathrm{d}V,\]
where $H$ denotes the mean curvature vector of $M$ and $\mathrm{d}V$ is the
area form induced by the immersion. In 1965, Willmore proposed the celebrated
conjecture that every  torus in $\mathbb R^3$ satisfies
$\widetilde{\mathcal W}(T^2)\geq 2\pi^2,$
with equality if and only if the torus is conformally congruent to the
Clifford torus. The Willmore conjecture has inspired substantial work; see,
for instance, \cite{W-1965,Li-Yau,M-R,B-1984,K-1989,B-2015,Chen1973,Mo-Ng-2014}. It was finally
proved by Marques and Neves in \cite{M-N1,M-N2}, using methods from geometric
measure theory and minimal surface theory. 
Nevertheless, sharp lower-bound problems for the Willmore energy remain open in several settings, including higher codimension and higher genus surfaces.

Let $f:M^n\to\mathbb R^N$ be an $n$-dimensional closed submanifold with $n>2$. The  Willmore functional of $f$ is defined by \cite{Chen1974,Li-Yau,B-1988,PeditWillmore1988,Wang1998}
\begin{equation}\label{eq-GW}
\mathcal W(M^n)\triangleq
\left(\frac{n}{n-1}\right)^{\frac n2}
\int_{M^n}
\left(S-n|H|^2\right)^{\frac n2}\mathrm{d}V,
\end{equation}
where $S$ denotes the squared norm of the second fundamental form of $f$ in $\mathbb R^N$, and $H$ denotes its mean curvature vector. This functional is invariant under conformal transformations of $\mathbb R^N$; see \cite{Chen1974,PeditWillmore1988,Wang1998}. More generally, Mondino and Nguyen investigated which curvature integrals of submanifolds remain invariant under conformal changes of the ambient metric. In particular, they showed that, for codimension-one surfaces, the Willmore energy is the unique global conformal invariant, up to the addition of a Gauss--Bonnet term \cite{Mo-Ng-2018}, i.e.,
$$\mathcal W(M^2)
=4\widetilde{\mathcal W}(M^2)-8\pi\chi(M^2).
$$
In the remainder of this paper, we will always use $\W(M^n)$ as the Willmore functional/energy.

For general submanifolds, Pinkall provided lower-bound estimates for the Willmore functional  via  Willmore-type inequalities for submanifolds \cite{Pinkall1986}.
The critical submanifolds of the Willmore functional are called Willmore submanifolds. 
Basic examples and the variational theory including stability and so on, have been investigated in \cite{Li2002,GuoLiWang2001,Li2001,QianTangYan2013,TangYan2012,R-1993,Wang1998}.
These works have developed important examples and tools for the study of  conformal geometry of submanifolds. Nevertheless, sharp lower bounds for the Willmore energy remain largely unknown, particularly in higher codimensions.


In this paper, we focus on the lower bound of the Willmore energy for a special class of submanifolds  in $\mathbb R^N$, namely flat $n$-dimensional tori ($n$-tori for short). By the Gauss equation, for such submanifolds, the Willmore energy reduces to the total mean curvature $\int_{T^n} |H|^n\,\mathrm{d}V$(up to the multiplicative constant $n^n$).
Concerning this functional,  Chen [\citealp[Theorem~3]{Chen1971}; \citealp[Theorem~3]{Chen1971TotalI}] proved that for every closed immersion $f:M^n\to\mathbb{R}^N$, one has $$\int_{M^n}|H|^n\,\mathrm{d}V\geq\omega_n,$$
with equality holding if and only if $f(M^n)$ is a round $n$-sphere in $\R^{n+1}\subset \mathbb{R}^N$ when $n\ge2$. Here $\omega_n$ denotes the volume of the unit $n$-sphere. See \cite[Theorem~2.1]{Brendle2026} for another proof by Brendle based on the Alexandrov-Bakelman-Pucci technique. When $n=2$, this goes back to the famous Willmore inequality \cite{W-1965}.
Inspired by Marques and Neves' resolution of the  Willmore conjecture in $\SB^3$ \cite{M-N1,M-N2} and also  Chen's works on the total mean curvature \cite{Chen1972,Chen1974,Chen1979}, Chen \cite{Chen2015} proposed the  following conjecture in 2015. 
 \begin{conjecture}(Conjecture 5.5 of \cite[Page 132]{Chen2015})\label{conj-chen}
For any immersion of the $n$-torus $T^n$ into Euclidean space, the following inequality holds,  
\begin{equation}
    \label{eq-chen}\int_{T^n} |H|^n\,\mathrm{d}V
    \geq
    \left(\frac{4\pi^2}{n}\right)^{n/2}.
\end{equation}
\end{conjecture}
\noindent 
Note that this bound is exactly the volume of the Clifford $n$-torus 
\[\CC=\SB^1\left(\sqrt{\frac{1}{n}}\right)\times\cdots\SB^1\left(\sqrt{\frac{1}{n}}\right)\subset \SB^{2n-1}.\] When $n=2$, this was obtained for flat $2$-tori by Chen \cite{Chen1981} following the method of Chern-Lashof \cite{ChernLashof1958}, and also by Li-Yau using Fourier expansion and an algebraic inequality \cite{Li-Yau}. 

Applying the Korkine–Zolotarev reduction from lattice theory, we can generalize Li–Yau's algebraic inequality 
to the cases $3\leq n \leq 5$, and then establish a sharp lower bound for the Willmore energy of flat $n$-tori in these dimensions, which also confirms Chen's conjecture in the flat case. 
However, for $n\geq 6$, the Korkine–Zolotarev reduction no longer yields the required algebraic inequality (see Remark~\ref{rk-4.6}). Nevertheless, we show that a global estimate for the Fourier expansion of the immersion can be obtained via finite truncation, in contrast to Li-Yau's termwise estimates. This approach allows us to establish the sharp lower bound of the Willmore energy for flat $n$-tori in every dimension, as well as a proof of Chen's conjecture in the flat case.



However, by applying suitable M\"{o}bius transformations to the Clifford $n$-torus, we show that Chen's conjecture does not hold for  non-flat $n$-tori when $n\geq 3$. More precisely, there exists a one-parameter family of M\"{o}bius transformations of $\mathbb S^{2n-1}$ such that the total mean curvature strictly decreases along the flow at least for $t$ sufficiently small.  
\medskip

\noindent{\bf Plan of the paper.}
In Section \ref{section-pre}, we recall the basic definitions of flat tori and list several formulas needed in the proofs. In Section \ref{section-tn}, we study arbitrary isometric immersions of flat tori into $\mathbb R^N$. We prove that the Clifford torus attains the sharp lower bound for the Willmore energy. In Section \ref{section-KZ}, we apply the Korkine--Zolotarev reduction to $T^n$ ($3\le n\le 5$) and prove a sharp Willmore inequality, with the rectangular torus as the equality case. In Section \ref{Section-chen-conj}, we  establish Chen's conjecture for the flat case and show that the conjecture does not hold for general immersions when $n\geq3$. 

\section{Preliminaries}\label{section-pre}
We collect some basic facts on flat $n$-tori that will be used in the sequel: their moduli space and Fourier series. In addition, we shall also recall the Cauchy--Binet theorem. Throughout the paper, vectors in $\mathbb{R}^n$ are regarded as column vectors.
\subsection{Flat tori and their moduli space}~

Let $\{v_1,\ldots,v_n\}$ be a basis of $\mathbb R^n$. Set
$$\Lambda=\{m_1v_1+\cdots+m_nv_n:\ m_i\in\mathbb Z\}=V\mathbb Z^n,$$
 with $V=(v_1,\ldots,v_n)\in GL(n,\R)$.
The quotient space $T_V^n=\mathbb{R}^n/V\mathbb{Z}^n=\mathbb{R}^n/\Lambda$, endowed with the Euclidean metric $g_0$ from $\R^n$, is a flat $n$-torus. 

The choice of the basis matrix $V$ is not unique. If $U\in GL(n,\mathbb Z)$, then $VU$ generates the same lattice as $V$, while multiplication by $O\in O(n)$ produces an isometric lattice. Therefore, the moduli space of the isometry class of $T^n_V$ can be described as
$$\mathcal{M}_n^{\mathrm{flat}}=O(n)\backslash GL(n,\mathbb{R})/GL(n,\mathbb{Z}).$$
On the other hand, replacing $V$ by $e^{\rho_0} V$, $\rho_0>0$, only multiplies the flat metric by the constant factor $e^{2\rho_0}$ and hence leaves the conformal class unchanged. 

\subsection{Fourier series on flat tori}~

For the torus $T_V^n=\mathbb{R}^n/V\mathbb{Z}^n=\mathbb{R}^n/\Lambda_V$, its dual lattice is 
$$\Lambda_V^*=\{\xi\in\mathbb{R}^n:\langle \xi,\lambda\rangle\in\mathbb{Z} \text{ for all } \lambda\in\Lambda_V\},$$ 
where $\langle \xi,\lambda\rangle=\xi^T\lambda$ denotes the natural pairing.
The spectrum of $T_V^n$ is given by
$$\operatorname{Spec}(T_V^n)=\{4\pi^2|\xi|^2:\xi\in\Lambda_V^*\}.$$
Moreover, for each $\xi\in\Lambda_V^*$, the function $e^{2\pi i\langle \xi,x\rangle}$ is an eigenfunction corresponding to the eigenvalue $4\pi^2|\xi|^2$, where $x=(x_1,\ldots,x_n)$ denotes the standard coordinates in $\mathbb R^n$ and  $g_0=\mathrm{d}x_1^2+\cdots+\mathrm{d}x_n^2$. 

The simplest flat $n$-torus is given by $V = I_n$, where $I_n$ denotes the identity matrix. In this case, $T_{\mathrm{I}_n}^n = \mathbb{R}^n / \mathbb{Z}^n$ is the product of $n$ copies of the unit circle, and it admits a minimal homothetic embedding in $\mathbb{S}^{2n-1}$ via  
\begin{equation}
    \label{eq-CT}
    \frac{1}{\sqrt{n}}
    \bigl(
    \cos(2\pi x_1), \sin(2\pi x_1),
    \ldots,
    \cos(2\pi x_n), \sin(2\pi x_n)
    \bigr): T_{\mathrm{I}_n}^n\rightarrow \mathbb{S}^{2n-1}.
\end{equation}
This embedding will be referred to as the Clifford $n$-torus $\CC$ in $\mathbb{S}^{2n-1}$. 

For a general flat $n$-torus, let $f:T_V^n\to\mathbb R^N$ be an isometric immersion. Consider its Fourier expansion 
\begin{equation}\label{eq-FS}
   f(x)=\sum_{\xi\in\Lambda^*_V}f_\xi e^{2\pi i\langle \xi,x\rangle}, 
\end{equation}
where $f_\xi\in\mathbb C^N$ are the Fourier coefficients and satisfying $f_{-\xi}=\overline{f_\xi}$ since $f$ is real-valued.

Associated with the Fourier expansion above, we denote by
\[
\mathcal A(f):=\{\xi\in\Lambda_V^*:f_\xi\neq0\}
\]
the set of all frequencies appearing in the immersion $f$, and define its
nonzero part by
\[
\mathcal A^*(f):=\mathcal  A(f)\setminus\{0\}.
\]

The following two lemmas are used in proofs of Theorem \ref{thm-forall-n}, Corollary \ref{thm-mini-iso} and Theorem \ref{thm-iso}.
\begin{lemma}\label{lemma-iso condition}
   If $f:(T^n_V,e^{2\rho_0}g_0)\longrightarrow \mathbb{R}^N$ is an isometric immersion (here $\rho_0=const$), then 
    $$\mathrm{span}_{\mathbb{R}}\mathcal{A}(f)=\mathbb{R}^n.$$
    \end{lemma}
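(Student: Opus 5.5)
Suppose, to the contrary, that $\mathrm{span}_{\mathbb R}\mathcal A(f)=W\subsetneq\mathbb R^n$. We will produce a nonzero vector $v\in\mathbb R^n$ along which $f$ does not vary, which is incompatible with $f$ being an immersion. Since $W$ is a proper subspace, pick a unit vector $v\in W^\perp$, so that $\langle \xi,v\rangle=0$ for every $\xi\in\mathcal A(f)$.

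The key step is to differentiate the Fourier expansion \eqref{eq-FS} termwise in the direction $v$. This gives
\[
\partial_v f(x)=\sum_{\xi\in\mathcal A(f)}2\pi i\langle\xi,v\rangle f_\xi e^{2\pi i\langle\xi,x\rangle}=0 .
\]
To justify this cleanly without worrying about convergence, I will argue at the level of Fourier coefficients. Because $f$ is smooth, $\partial_v f$ is a smooth function on $T^n_V$. Integrating by parts, its Fourier coefficient at $\xi$ equals $2\pi i\langle\xi,v\rangle f_\xi$. This coefficient vanishes for all $\xi$: either $f_\xi=0$, or $\xi\in\mathcal A(f)$ and then $\langle\xi,v\rangle=0$. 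By uniqueness of Fourier coefficients for continuous (indeed $L^2$) functions on the torus, $\partial_v f\equiv0$.

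On the other hand, the isometry condition for the metric $e^{2\rho_0}g_0$ gives
\[
|\dd f(v)|^2=e^{2\rho_0}|v|^2=e^{2\rho_0}\neq0,
\]
so $\dd f(v)=\partial_v f\neq0$. This contradicts $\partial_v f\equiv0$, and hence $\mathrm{span}_{\mathbb R}\mathcal A(f)=\mathbb R^n$.

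The only point needing care is the coefficient identity for $\partial_v f$. It is routine: the functions $e^{2\pi i\langle\xi,x\rangle}$ are well defined on $T^n_V$ precisely because $\xi\in\Lambda_V^*$, and periodicity kills the boundary terms in the integration by parts. Beyond that, no real obstacle is expected. In fact, the argument only uses that $f$ is an immersion, not that the conformal factor is constant.
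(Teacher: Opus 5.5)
Your proof is correct, and it runs on a slightly different mechanism from the paper's. Both arguments rest on the same fact: the Fourier coefficients of $\partial_v f$ are $2\pi i\langle\xi,v\rangle f_\xi$.

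The paper averages the isometry condition over the torus and applies Parseval. This gives the Gram-matrix identity \eqref{eq-int-iso}, that is, $e^{2\rho_0}I_n=4\pi^2\sum_{\xi\in\mathcal A(f)}|f_\xi|^2\xi\xi^T$. Applying this to a nonzero $X$ orthogonal to every frequency kills the right-hand side, which is absurd.

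You instead use uniqueness of Fourier coefficients to get $\partial_v f\equiv0$ pointwise, and then contradict the immersion property directly.

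Your route is more economical in its hypotheses. It needs no computation of the averaged Gram matrix, only that $\partial_v f\not\equiv0$, which any immersion satisfies regardless of the conformal factor, as you note. The paper's averaging argument would in fact extend just as easily, since $\int|\partial_X f|^2\,\mathrm{d}V>0$ for any immersion; your version simply makes this generality explicit.

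The paper's route has a different payoff: it produces a quantitative identity rather than just a qualitative conclusion, and that identity is reused later. It appears as $Q_{F_k}\to I_n$ in the proof of Lemma~\ref{lemma-H-estimate}, and as $4\pi^2\sum|f_\xi|^2\xi_i^2=1$ in Theorem~\ref{thm-iso}. That explains why the authors prove the lemma this way.
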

    \begin{proof}
The Fourier expansion of $f$ is given by (\ref{eq-FS}). Since $f$ is an isometric immersion, we get
\begin{equation}\label{eq-int-iso}
    \begin{split}
       e^{2\rho_0}\delta_{ij}&= \frac{1}{\mathcal{V}(f)}\int_{T^n_V}\frac{\partial f}{\partial x_i}\cdot\frac{\partial f}{\partial x_j} \mathrm{d}V=\frac{-1}{\mathcal{V}(f)}\int_{T^n_V}4\pi^2\sum_{\xi,\eta\in\Lambda^*_V}\xi_i\eta_jf_\xi f_\eta e^{2\pi i \langle\xi+\eta,x\rangle}\mathrm{d}V\\
        &=4\pi^2\sum_{\xi\in\Lambda^*_V}|f_\xi|^2\xi_i\xi_j=4\pi^2\sum_{\xi\in\mathcal{A}(f)}|f_\xi|^2\xi_i\xi_j .
    \end{split}
\end{equation}
It follows that for any $X\in\R^n$
\begin{equation}\label{eq-constraction}
   4\pi^2\sum_{\xi\in\mathcal{A}(f)}|f_\xi|^2\xi\xi^T X=e^{2\rho_0}I_nX=e^{2\rho_0}X,
\end{equation}
where $I_n$ denotes the identity matrix. We now argue by contradiction. Assume that for the isometric immersion $f$, the space $\operatorname{span}_{\mathbb{R}}\mathcal{A}(f)$ is a proper linear subspace of $\mathbb{R}^n$. Then there exists a nonzero vector $X\in\{\operatorname{span}_{\mathbb{R}}\mathcal{A}(f)\}^\perp\setminus\{\vec{0}\}$, that is, $    \langle\xi,X\rangle=\xi^TX=0$ for all $\xi\in\mathcal{A}(f)$. But this contradicts \eqref{eq-constraction}.
    \end{proof}
 
\begin{lemma}\label{lem-Cliff}
Let $f: (T_{\mathrm{I}_n}^n, g_0)\rightarrow \mathbb{R}^N$ be an isometric immersion with  the length of the mean curvature vector being constant. If  
$$\mathcal{A}(f)=\{\pm e_1, \pm e_2, \cdots, \pm e_n\},$$  
then up to a dilation, $f$ is congruent to the Clifford $n$-torus $\CC$ in $\mathbb{S}^{2n-1}$. 
\end{lemma}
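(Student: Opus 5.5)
Write $f(x)=f_0+\sum_{k=1}^n\bigl(a_k\cos(2\pi x_k)+b_k\sin(2\pi x_k)\bigr)$ with $a_k,b_k\in\mathbb R^N$. This is just the real form of the Fourier expansion \eqref{eq-FS} with $\mathcal A(f)\subset\{0,\pm e_1,\dots,\pm e_n\}$; here $f_{\pm e_k}=\tfrac12(a_k\mp i b_k)$. Translating, we may assume $f_0=0$. The plan is to turn the isometry condition into algebraic conditions on the vectors $a_k,b_k$. Then we show these vectors are mutually orthogonal with a common length, which is exactly the Clifford torus up to a rotation of $\mathbb R^N$ and a dilation.

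\textbf{Step 1 (isometry).} We have $\partial_k f=2\pi(-a_k\sin 2\pi x_k+b_k\cos 2\pi x_k)$. The condition $\partial_k f\cdot\partial_k f=1$ holds for all $x_k$, and the functions $1,\cos 4\pi x_k,\sin 4\pi x_k$ are independent. Hence
\[
|a_k|=|b_k|=\tfrac{1}{2\pi},\qquad a_k\cdot b_k=0 .
\]
For $j\neq k$, the product $\partial_j f\cdot\partial_k f$ is a combination of $\sin 2\pi x_j\sin 2\pi x_k$, $\sin 2\pi x_j\cos 2\pi x_k$, $\cos 2\pi x_j\sin 2\pi x_k$ and $\cos 2\pi x_j\cos 2\pi x_k$. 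These are linearly independent functions. So vanishing of the product forces
\[
a_j\cdot a_k=a_j\cdot b_k=b_j\cdot a_k=b_j\cdot b_k=0 .
\]
Thus $\{a_1,b_1,\dots,a_n,b_n\}$ is an orthogonal family of $2n$ vectors, all of length $1/(2\pi)$. In particular $N\ge 2n$.

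\textbf{Step 2 (conclusion).} Choose an orthogonal transformation of $\mathbb R^N$ sending $2\pi a_k\mapsto \epsilon_{2k-1}$ and $2\pi b_k\mapsto\epsilon_{2k}$, where $\epsilon_i$ is the standard basis. Then
\[
f=\tfrac{1}{2\pi}\bigl(\cos 2\pi x_1,\sin 2\pi x_1,\dots,\cos 2\pi x_n,\sin 2\pi x_n\bigr),
\]
which is the dilation by $\sqrt n/(2\pi)$ of \eqref{eq-CT}. This holds up to a translation and a rotation, which proves the lemma.

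The constant-$|H|$ hypothesis does not seem to be needed: the isometry condition alone already forces the conclusion once the frequency set is $\{\pm e_k\}$. The only delicate point is Step 1's use of the linear independence of the trigonometric products, and that is routine. I would therefore check that the paper's convention for $\mathcal A(f)$, which may include $0$, matches this reading. The hypothesis is presumably kept only for how the lemma is applied later. If I had misread the convention, a fallback would be to use $\Delta f=nH$ with $\Delta f=-4\pi^2(f-f_0)$. Combining this with constant $|H|$ gives that $|f-f_0|$ is constant, which again yields the orthogonality relations after expanding $|f-f_0|^2$ in Fourier modes.
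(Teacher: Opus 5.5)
Your proof is correct, and it takes a different route from the paper's.

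\textbf{The paper's route.} It uses the constant-$|H|$ hypothesis: from $-4\pi^2 f=\Delta f=nH$ it gets $|f|$ constant. So after a dilation, $f$ is a minimal immersion into a sphere by first eigenfunctions. It then invokes the unimodular criterion (Proposition~2.7 of \cite{L-W-X2024}) to conclude that $f$ is homogeneous. This normalizes $f_{e_j}=a_j(E_{2j-1}-iE_{2j})$ up to a rotation. Only at the end does it use the averaged isometric condition \eqref{eq-int-iso} to get $a_j=1/(4\pi)$.

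\textbf{Your route.} You expand the pointwise condition $\partial_i f\cdot\partial_j f=\delta_{ij}$ and read off its nonconstant Fourier modes: $\cos 4\pi x_k$, $\sin 4\pi x_k$, and the mixed products in $x_j,x_k$. These modes encode exactly the relations the paper extracts from the cited homogeneity result, namely $a_k\cdot b_k=0$, $|a_k|=|b_k|$, and orthogonality across different $k$. The averaged condition \eqref{eq-int-iso}, by contrast, only sees the constant mode.

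\textbf{What each buys.} Your argument is self-contained and elementary, with no external citation. It also confirms your remark that the constant-$|H|$ hypothesis is redundant: orthogonality gives $|f|^2=n/(4\pi^2)$, hence $|H|=4\pi^2|f|/n=2\pi/\sqrt n$ automatically. The paper's route places the lemma within the framework of minimal flat tori in spheres. The price is an external result and an extra hypothesis, which is harmless only because it is available where the lemma is applied. That is the equality case of Theorem~\ref{thm-forall-n}, where constancy of $|H|$ comes from equality in H\"older's inequality.

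Your fallback paragraph is unnecessary. Since $0\notin\mathcal A(f)$ in the statement, $f_0=0$ already, and your main argument does not depend on this anyway.
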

\begin{proof}
Observe that all elements in $\mathcal{A}(f)$ are the shortest vectors of the lattice $\Lambda_I^*$, which implies that $f$ is an isometric immersion into $\mathbb{R}^{N}$ via the first eigenfunctions. Moreover, from  
$$-4\pi^2 f = \Delta f = n H,$$  
it follows that $|f|$ is a constant. 
Therefore, after a dilation,  we can regard $f$ as a minimal isometric immersion into $\mathbb{S}^{N-1}$. It is obvious that $\mathcal{A}(f)=\{\pm e_1,\cdots,\pm e_n\}$ satisfies the unimodular condition of Proposition 2.7 in \cite{L-W-X2024}. Consequently, $f$ is homogeneous, and up to an orthogonal transformation of $\mathbb{R}^N$, we can assume that 
$$f_{e_j}=a_j(E_{2j-1}-iE_{2j}),\quad 
1\leq j\leq n,$$
 where $E_1,\dots, E_N$ are the standard orthonormal basis of $\R^N$. Then it follows from the isometric condition 
$$\frac{I_n}{16\pi^2}=\sum_{j=1}^n a_j^2\, e_j e_j^t$$
that 
$a_j=\frac{1}{4\pi}$ for all $1\leq j\leq n$. 
Hence, up to a dilation, $f$ is congruent to the standard Clifford $n$-torus $\CC$ in $\mathbb{S}^{2n-1}$. 
\end{proof}
 
\subsection{The Cauchy--Binet Formula}~

We now recall the Cauchy--Binet formula for matrices \cite{B-W1989} and introduce the notation that will be used below.

Let $B=(b_{ij})$ be an $n\times m$ matrix and set $1\leq k\leq\min\{m,n\}$. Denote $[n]=\{1,\ldots,n\}$ and $[m]=\{1,\ldots,m\}$. For two index sets $W=\{i_1<i_2<\cdots<i_k\}\subset [n]$ and $T=\{j_1<j_2<\cdots<j_k\}\subset [m]$,
we denote by $B_{W,T}$ the submatrix of $B$ obtained by taking the rows indexed by
$W$ and the columns indexed by $T$, that is
$$ B_{W,T} =\begin{pmatrix}
b_{i_1j_1} & b_{i_1j_2} & \cdots & b_{i_1j_k} \\
b_{i_2j_1} & b_{i_2j_2} & \cdots & b_{i_2j_k} \\
\vdots & \vdots & \ddots & \vdots \\
b_{i_kj_1} & b_{i_kj_2} & \cdots & b_{i_kj_k}
\end{pmatrix}.
$$

The determinant $\det( B_{W,T})$ is called a $k$-minor of $B$. 
We shall use the following form of the Cauchy--Binet formula.

\begin{theorem}\label{thm-CB}\cite{B-W1989}
Let $B,C$ be two $n\times m$ matrices. If $m<n$, $\det(BC^T)=0.$
If $m\geq n$, 
\begin{equation}\label{eq:CB}
\det(BC^T)
=
\sum_{\substack{T\subset [m]\\ |T|=n}}
\det(B_{[n],T})\det(C_{[n],T}).    
\end{equation}

\end{theorem}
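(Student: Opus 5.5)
The plan is to prove the Cauchy--Binet formula by multilinearity of the determinant in the columns of $C^T$ (equivalently, the rows of $C$), expanding $BC^T$ column by column.

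Write $b_k$ for the $k$-th column of $B$ ($k\in[m]$). The $j$-th column of $BC^T$ is
\[
\sum_{k=1}^m c_{jk}\, b_k .
\]
Expanding $\det(BC^T)$ multilinearly in its $n$ columns gives
\[
\det(BC^T)=\sum_{k_1,\dots,k_n\in[m]} c_{1k_1}\cdots c_{nk_n}\,\det(b_{k_1},\dots,b_{k_n}).
\]
Any term with a repeated index $k_i=k_j$ vanishes, since that determinant has two equal columns.

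If $m<n$, every $n$-tuple from $[m]$ has a repetition, so $\det(BC^T)=0$. This settles the first claim.

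If $m\ge n$, group the surviving tuples, which have distinct indices, by their underlying set $T=\{j_1<\dots<j_n\}\subset[m]$. Each such tuple is $k_i=j_{\sigma(i)}$ for a unique permutation $\sigma\in S_n$, and
\[
\det(b_{j_{\sigma(1)}},\dots,b_{j_{\sigma(n)}})=\operatorname{sgn}(\sigma)\det(B_{[n],T}).
\]
Hence the contribution of $T$ is
\[
\det(B_{[n],T})\sum_{\sigma\in S_n}\operatorname{sgn}(\sigma)\prod_{i=1}^n c_{i\,j_{\sigma(i)}}=\det(B_{[n],T})\det(C_{[n],T}),
\]
by the Leibniz formula applied to $C_{[n],T}$. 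Summing over all $T$ gives \eqref{eq:CB}.

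The only step requiring care is this bookkeeping: each injective map $[n]\to[m]$ must correspond uniquely to a pair $(T,\sigma)$, and the sign produced by reordering the columns of $B$ must be exactly the sign appearing in the Leibniz expansion of $\det(C_{[n],T})$. I do not expect any other step to present an obstacle.
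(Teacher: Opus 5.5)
Your proof is correct and complete. The column expansion of $BC^T$ and the vanishing of terms with a repeated index are both right. The pigeonhole argument for $m<n$ is right. The identification of each injective tuple with a unique pair $(T,\sigma)$ is also correct: the sign $\operatorname{sgn}(\sigma)$ from reordering the columns of $B_{[n],T}$ is exactly the sign in the Leibniz expansion of $\det(C_{[n],T})$, since $(C_{[n],T})_{i,\sigma(i)}=c_{i\,j_{\sigma(i)}}$.

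The paper gives no proof of its own. It quotes the formula from \cite{B-W1989} and uses only the special case $B=C$. In that case your identity immediately gives $\det(BB^T)=\sum_{T}\det(B_{[n],T})^2$, which is what feeds \eqref{eq-QF} and \eqref{eq-CF}. So there is no argument in the paper to compare against.

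Yours is the standard textbook proof by multilinearity and the Leibniz formula. The common alternatives are shorter to state but use more machinery: a rank bound $\operatorname{rank}(BC^T)\le m<n$ for the first claim, and functoriality of the top exterior power, $\Lambda^n(BC^T)=\Lambda^n B\,\Lambda^n(C^T)$, for the second. Your route is fully elementary and valid over any commutative ring, including the $m<n$ case.
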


When $B=C=(b_1, b_2, \cdots, b_m)$ with $b_j\in\mathbb R^n$, $1\leq j\leq m$, we get $BB^T=\sum_{\alpha=1}^m b_\alpha b_\alpha^T$ and
$$
\det(BB^T)
=
\sum_{\substack{T\subset [m]\\ |T|=n}}
\det(B_{[n],T})^2=
\sum_{\substack{T=\{\alpha_1<\cdots<\alpha_n\}\subset [m]}}
\det(b_{\alpha_1},\ldots,b_{\alpha_n})^2,
$$
which means that $\det(BB^T)$ is equal to the sum of the squares of the $n$-dimensional volumes
spanned by all possible choices of $n$ column vectors of $B$.

We also need the following useful lemma.
\begin{lemma}\label{lemma-matrix}\cite{H-J2012}
Let $B_1$ be positive definite and $B_2$ be positive semidefinite. If $B_1- B_2$ is a positive semidefinite matrix, then $$ \det B_1\geq \det B_2.$$
\end{lemma}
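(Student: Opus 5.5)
The plan is to reduce to the case $B_1=I_n$ by a congruence transformation, and then compare eigenvalues.

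First, since $B_1$ is symmetric positive definite, it has a positive definite square root, or equivalently a Cholesky factorization. We therefore write $B_1=CC^T$ with $C\in GL(n,\mathbb R)$, and set
\[
M:=C^{-1}B_2\,C^{-T}.
\]
Congruence by an invertible matrix preserves positive semidefiniteness, because $X^T(C^{-1}AC^{-T})X=(C^{-T}X)^TA(C^{-T}X)$. Applying this to $A=B_2$ shows that $M$ is symmetric positive semidefinite. Applying it to $A=B_1-B_2$ shows that
\[
C^{-1}(B_1-B_2)C^{-T}=I_n-M
\]
is positive semidefinite.

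Next, we read off the eigenvalues. Diagonalize $M=P\,\mathrm{diag}(\mu_1,\ldots,\mu_n)P^T$ with $P\in O(n)$. Positive semidefiniteness of $M$ gives $\mu_i\ge0$. Positive semidefiniteness of $I_n-M$ gives $1-\mu_i\ge0$, by testing against the eigenvectors. Hence $0\le\mu_i\le1$ for every $i$, and so
\[
0\le \det M=\prod_{i=1}^n\mu_i\le1.
\]

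Finally, by multiplicativity of the determinant,
\[
\det B_2=\det C\,\det M\,\det C^T=\det B_1\cdot\det M .
\]
Since $\det B_1>0$, the bound on $\det M$ gives $\det B_2\le\det B_1$.

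There is no real obstacle here. The only point needing care is that positive definiteness of $B_1$ is what makes $C$ invertible, so that the congruence reduction is legitimate. As a byproduct, equality $\det B_1=\det B_2$ forces every $\mu_i=1$, hence $M=I_n$ and $B_1=B_2$. This rigidity may be useful in the equality discussions later in the paper.
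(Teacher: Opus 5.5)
Your proof is correct and complete. The paper gives no argument of its own for this lemma; it only cites Horn--Johnson, so there is nothing to match step by step.

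Another common textbook route goes through eigenvalue monotonicity. From $B_1-B_2\succeq 0$, the Courant--Fischer min-max principle (Weyl's monotonicity theorem) gives $\lambda_i(B_1)\ge\lambda_i(B_2)\ge 0$ for the eigenvalues listed in increasing order, and multiplying these yields $\det B_1\ge\det B_2$.

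Your congruence normalization $B_1\mapsto I_n$ avoids min-max entirely. It reduces everything to the trivial fact that a symmetric $M$ with $0\preceq M\preceq I_n$ has spectrum in $[0,1]$, and it gives the rigidity $\det B_1=\det B_2\Rightarrow B_1=B_2$ at no extra cost. In exchange, the Weyl route gives the stronger eigenvalue-by-eigenvalue comparison. It also works unchanged when $B_1$ is only semidefinite, where your argument would need an extra step, such as $\ker B_1\subset\ker B_2$ or replacing $B_1$ by $B_1+\epsilon I_n$.

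Both arguments, like the statement itself, rely on $B_1$ and $B_2$ being symmetric. This holds for the matrices $C$ and $C_F$ to which the paper applies the lemma. Finally, the rigidity you point out is not needed later. The equality case in Lemma~\ref{lemma-H-estimate} is handled by a separate limiting argument over the finite truncations $F_k$, not through equality in this lemma.
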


\section{On the Willmore problem for  flat $T^n$ in $\R^{N}$}\label{section-tn}

 For a flat $n$-torus in the Euclidean space, the Willmore functional \eqref{eq-GW} becomes
\begin{equation}\label{eq-W-iso}
    \mathcal{W}(f) = n^n \int_{T_V^n} |H|^n \, \mathrm{d}V.
\end{equation}
Thus, in the flat case, estimating the conformal Willmore functional reduces to estimating the total mean curvature. In this section, we adopt a Fourier series approach and estimate the contribution of the dual lattice in a global manner. This yields a sharp lower bound for the Willmore energy of flat $n$-tori, which is achieved by the Clifford $n$-torus $\CC$ in $\mathbb{S}^{2n-1}$. The precise statement is as follows.

\begin{theorem}\label{thm-forall-n}
 Let $f:(T^n_V,g_0)\to \mathbb{R}^N$ be an immersion such that there exists a  M\"{o}bius transformation $\varphi$ of $\mathbb{R}^N$ satisfying  $(\phi\circ f)^*g_{\mathbb R^N} = g_0$.
Then \[\W(f)\ge (4n\pi^2)^{n/2},\] with equality if and only if $f$ is M\"{o}bius congruent to the Clifford torus $\CC\subset \mathbb{S}^{2n-1}\subset \mathbb{R}^N$.
\end{theorem}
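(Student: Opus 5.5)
Since $\W$ is invariant under M\"obius transformations, I will replace $f$ by $\varphi\circ f$ and assume from the start that $f:(T^n_V,g_0)\to\R^N$ is an isometric immersion. By \eqref{eq-W-iso} we have $\W(f)=n^n\int|H|^n\,\dd V$. Write $\mathcal V=\mathrm{Vol}(T^n_V)=|\det V|$ and $d=\det(\Lambda_V^*)=\mathcal V^{-1}$.

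The first step is H\"older's inequality:
\[
\W(f)\ge \mathcal V\Bigl(\frac{n^2}{\mathcal V}\int |H|^2\,\dd V\Bigr)^{n/2}.
\]
Since $nH=\Delta f$, Parseval gives
\[
\frac{n^2}{\mathcal V}\int|H|^2\,\dd V=16\pi^4\sum_{\xi}|\xi|^4|f_\xi|^2 .
\]
Put $a_\xi=4\pi^2|f_\xi|^2$ for $\xi\in\mathcal A^*(f)$. Then \eqref{eq-int-iso} says $\sum a_\xi\,\xi\xi^T=I_n$, so the quantity above equals $4\pi^2\sum a_\xi|\xi|^4$. The target $(4n\pi^2)^{n/2}$ is therefore reduced to the lattice inequality
\[
\sum_{\xi} a_\xi|\xi|^4\ \ge\ n\,d^{2/n}\qquad\text{whenever }\ \sum_\xi a_\xi\,\xi\xi^T=I_n .
\]
Indeed, this inequality gives $\W\ge \mathcal V\,(4\pi^2 n)^{n/2}d=(4n\pi^2)^{n/2}$.

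To prove the lattice inequality I would estimate globally rather than termwise. Consider the positive semidefinite matrix
\[
B=\sum_\xi a_\xi|\xi|^2\,\xi\xi^T ,
\]
whose trace is $\sum a_\xi|\xi|^4$. By AM--GM on its eigenvalues, $\operatorname{tr}B\ge n(\det B)^{1/n}$, so it suffices to show $\det B\ge d^2$. Apply the Cauchy--Binet formula (Theorem~\ref{thm-CB}) to $B$ and to $\sum a_\xi\xi\xi^T=I_n$. This expresses both determinants as sums over $n$-subsets $T=\{\xi_1,\dots,\xi_n\}\subset\mathcal A^*(f)$. For $I_n$ the summand is $\prod a_{\xi_i}\det(\xi_T)^2$; for $B$ it is $\prod a_{\xi_i}|\xi_i|^2\det(\xi_T)^2$.

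Two facts give termwise control for each $T$:
\begin{itemize}
\item Hadamard's inequality gives $\prod|\xi_i|^2\ge\det(\xi_T)^2$.
\item When $\det(\xi_T)\ne0$, the vectors $\xi_i$ span a full-rank sublattice of $\Lambda_V^*$, so $\det(\xi_T)^2$ is a positive integer multiple of $d^2$, hence at least $d^2$.
\end{itemize}
Combining them,
\[
\det B\ \ge\ \sum_T\prod a_{\xi_i}\det(\xi_T)^4\ \ge\ d^2\sum_T\prod a_{\xi_i}\det(\xi_T)^2=d^2\det I_n=d^2 .
\]
Finding this combination is, I think, the main obstacle. Estimating frequency by frequency, as Li--Yau do, fails in higher dimensions. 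The determinant trick is what turns the integrality of the dual lattice into a global bound.

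For the equality case, every inequality above must be sharp.
\begin{itemize}
\item Equality in H\"older forces $|H|$ to be constant.
\item Every $n$-subset $T\subset\mathcal A^*(f)$ with $\det(\xi_T)\neq0$ must have mutually orthogonal vectors, and must satisfy $|\det(\xi_T)|=d$, so it is a basis of $\Lambda_V^*$. Any further nonzero frequency, independent of $n-1$ of these, would violate orthogonality. Hence, using Lemma~\ref{lemma-iso condition}, $\mathcal A^*(f)=\{\pm\xi_1,\dots,\pm\xi_n\}$ with $\{\xi_i\}$ an orthogonal basis of $\Lambda_V^*$.
\item Since the $\xi_i$ are orthogonal and $\sum a_\xi\xi\xi^T=I_n$, we get $a_{\xi_i}|\xi_i|^2=\tfrac12$ for each $i$ (the pair $\pm\xi_i$ together contributing $1$). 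Equality in AM--GM means $B$ is scalar, which forces $a_{\xi_i}|\xi_i|^4$ to be independent of $i$. Therefore all $|\xi_i|$ are equal.
\end{itemize}
It follows that $\Lambda_V^*$, and hence $\Lambda_V$, is a square lattice. After a rotation and a dilation we may take $V=I_n$ with $\mathcal A(f)=\{\pm e_1,\dots,\pm e_n\}$ and $|H|$ constant. Lemma~\ref{lem-Cliff} then identifies $f$ with the Clifford torus $\CC$ up to dilation, and composing back with $\varphi^{-1}$ gives M\"obius congruence. The converse, that $\CC$ attains the bound, is a direct computation: $|H|=1$ and $\mathrm{Vol}(\CC)=(4\pi^2/n)^{n/2}$.
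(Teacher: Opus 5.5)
Your proposal is correct and is essentially the paper's proof: H\"older plus Parseval, AM--GM on $\sum_\xi a_\xi|\xi|^2\xi\xi^T$, Cauchy--Binet combined with Hadamard's inequality and $|\det(\xi_T)|\ge \operatorname{Vol}(T^n_V)^{-1}$, and the same equality analysis ending in Lemma~\ref{lem-Cliff}.

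The one technical difference is that you apply Cauchy--Binet directly to the infinite sums, whereas the paper truncates to finite $F\subset\mathcal A^*(f)$, uses Lemma~\ref{lemma-matrix} and a limit, and argues the equality case by contradiction. Your version is fine, because $\sum_\xi a_\xi|\xi|^4<\infty$ makes the expansion absolutely convergent, but you should say so, since Theorem~\ref{thm-CB} is stated only for finite matrices. Two smaller points: translate so that $f_0=0$ before invoking Lemma~\ref{lem-Cliff}, and note that multiples $m\xi_i$ with $|m|\ge2$ are ruled out by $|\det(\xi_T)|=d$, not by orthogonality.
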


 
It follows from H\"older's inequality that
\begin{equation}\label{eq-Holder}
   \int_{T^n_V}|H|^n\,\mathrm{d}V \geq \bigl(\operatorname{Vol}(T^n_V)\bigr)^{1-\frac{n}{2}} \biggl(\int_{T^n_V}|H|^2\,\mathrm{d}V\biggr)^{\frac{n}{2}}.
\end{equation}
For the integral on the right-hand side, using $\Delta f= n H$ and the Fourier expansion of $f$ given in \eqref{eq-FS}, one can verify directly that
\begin{equation}\label{eq-H}
\begin{split}
   \int_{T^n_V}|H|^2\,\mathrm{d}V = \frac{1}{n^2}\int_{T^n_V}|\Delta f|^2\,\mathrm{d}V = \frac{16}{n^2}\pi^4\,\operatorname{Vol}(T^n_V)\sum_{\xi\in\Lambda^*_V}|f_{\xi}|^2|\xi|^4.
\end{split}
\end{equation}
Then the lower bound of $\mathcal{W}(f)$ follows from the following lemma.

\begin{lemma}\label{lemma-H-estimate}
    For any isometric immersion $f \colon (T^n_V, g_0) \longrightarrow \mathbb{R}^N$, we have 
\begin{equation}\label{ineq-HV-estimate}
\sum_{\xi \in \Lambda^*_V} 4\pi^2 |f_\xi|^2 |\xi|^4 \geq n \operatorname{Vol}(T^n_V)^{-2/n}.
\end{equation}
Equality holds if and only if
\[
\Lambda^*_V = \operatorname{Span}_\mathbb{Z}\{\eta_1, \eta_2, \dots, \eta_n\},
\]
where $\eta_1, \eta_2, \dots, \eta_n$ are mutually orthogonal vectors of the same length in $\mathbb{R}^n$, with $\mathcal{A}^*(f) = \{\pm\eta_1, \dots, \pm\eta_n\}$.  
\end{lemma}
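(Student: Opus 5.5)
The plan is to rewrite the isometry condition as a positive-weighted frame identity on the dual lattice, and then compare the determinant of a second, weighted matrix with the lattice covolume. Set $w_\xi:=4\pi^2|f_\xi|^2\ge 0$ for $\xi\in\mathcal{A}^*(f)$. By \eqref{eq-int-iso} with $\rho_0=0$,
\[
\sum_{\xi\in\mathcal{A}^*(f)} w_\xi\,\xi\xi^T=I_n .
\]
Write $d:=\operatorname{covol}(\Lambda_V^*)=|\det V|^{-1}=\operatorname{Vol}(T^n_V)^{-1}$. Then \eqref{ineq-HV-estimate} is exactly the inequality $\sum_\xi w_\xi|\xi|^4\ge n\,d^{2/n}$. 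I will introduce the auxiliary positive semidefinite matrix
\[
M:=\sum_{\xi\in\mathcal{A}^*(f)} w_\xi|\xi|^2\,\xi\xi^T,\qquad \operatorname{tr}M=\sum_\xi w_\xi|\xi|^4 .
\]
By the AM--GM inequality on the eigenvalues of $M$ we have $\operatorname{tr}M\ge n(\det M)^{1/n}$. So it suffices to prove $\det M\ge d^2$.

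For this I will apply the Cauchy--Binet formula (Theorem~\ref{thm-CB}) to both matrices. I enumerate $\mathcal{A}^*(f)$ as $\xi_1,\xi_2,\dots$; if the set is infinite, I truncate to finitely many terms and pass to a limit, which is where some care is needed. Cauchy--Binet gives
\[
1=\det I_n=\sum_{T}\Bigl(\prod_{\alpha\in T}w_\alpha\Bigr)\det(\xi_T)^2,\qquad
\det M=\sum_{T}\Bigl(\prod_{\alpha\in T}w_\alpha|\xi_\alpha|^2\Bigr)\det(\xi_T)^2 .
\]
Only index sets $T$ with $\det(\xi_T)\neq0$ contribute. For such $T$, Hadamard's inequality gives $\prod_{\alpha\in T}|\xi_\alpha|^2\ge\det(\xi_T)^2$. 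Moreover $\det(\xi_T)$ is a nonzero integer multiple of $d$, because the vectors $\xi_\alpha$ lie in $\Lambda_V^*$, so $\det(\xi_T)^2\ge d^2$. Combining these termwise,
\[
\det M\ge\sum_T\Bigl(\prod_{\alpha\in T} w_\alpha\Bigr)\det(\xi_T)^4\ge d^2\sum_T\Bigl(\prod_{\alpha\in T} w_\alpha\Bigr)\det(\xi_T)^2=d^2 .
\]
This is a global estimate, in the spirit announced in the introduction, and it is the key step. With the truncation handled, the inequality follows.

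The main obstacle is the equality case. Equality forces three things. First, $M$ is a scalar matrix. Second, every independent $n$-subset $T$ of $\mathcal{A}^*(f)$ consists of mutually orthogonal vectors, from equality in Hadamard's inequality. Third, every such subset has $|\det(\xi_T)|=d$, i.e.\ it is a $\mathbb{Z}$-basis of $\Lambda_V^*$.

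From these I will deduce that $\mathcal{A}^*(f)$ spans only $n$ lines. Lemma~\ref{lemma-iso condition} guarantees that the frequencies span $\mathbb{R}^n$. Suppose $\mathcal{A}^*(f)$ contained $n+1$ pairwise non-parallel vectors. Choose an independent $n$-subset, and swap one of its members for a vector that is neither parallel nor orthogonal to it, keeping independence. This produces an independent but non-orthogonal set, a contradiction. So $\mathcal{A}^*(f)=\{\pm\eta_1,\dots,\pm\eta_n\}$ with the $\eta_i$ orthogonal, and they form a basis of $\Lambda_V^*$.

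It remains to show the lengths are equal. The frame identity gives $(w_{\eta_i}+w_{-\eta_i})|\eta_i|^2=1$ for each $i$. Hence $M=\sum_i|\eta_i|^2\,\hat\eta_i\hat\eta_i^T$, where $\hat\eta_i=\eta_i/|\eta_i|$. Since $M$ is scalar, all $|\eta_i|$ are equal.

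Conversely, for such a configuration every inequality in the chain above is an equality, which is a direct check.
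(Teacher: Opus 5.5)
Your proposal is correct and follows essentially the same route as the paper. Both arguments use AM--GM on the eigenvalues of $C=\sum_\xi 4\pi^2|f_\xi|^2|\xi|^2\xi\xi^T$, then Cauchy--Binet with Hadamard's inequality and the covolume bound to get $\det C\ge\operatorname{Vol}(T^n_V)^{-2}$. In the equality case they use the same exchange of one $\eta_i$ for another frequency, followed by the observation that $C=\sum_i\eta_i\eta_i^T$ must be scalar.

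The paper fills the truncation step you leave open as follows. For finite $F$ it proves $\det C\ge\det C_F\ge\operatorname{Vol}(T^n_V)^{-2}\det Q_F$, using Lemma~\ref{lemma-matrix} since $C-C_F$ is positive semidefinite, and then lets $Q_{F_k}\to I_n$. It obtains the termwise equality conditions by fixing an offending independent $n$-subset inside all sufficiently large $F_k$; your infinite Cauchy--Binet expansion, read as a sum of nonnegative terms, does the same job.
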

\begin{proof}

Set $$C\triangleq \sum_{\xi\in \Lambda^*_V}4\pi^2 |f_\xi|^2 |\xi|^2\,\xi\xi^T,$$ whose trace is given by 
$$\operatorname{tr}C=\sum_{\xi\in \Lambda^*_V}4\pi^2 |f_\xi|^2 |\xi|^4=\sum_{\xi\in \mathcal{A}^*(f)}4\pi^2 |f_\xi|^2 |\xi|^4.$$
 For any $x\in\mathbb R^n\setminus\{(0,\cdots,0)\}$, it follows from the spanning property of $\mathcal{A}(f)$ in Lemma \ref{lemma-iso condition} that 
$$x^T Cx=\sum_{\xi\in \mathcal{A}^*(f)}4\pi^2 |f_\xi|^2 |\xi|^2\langle \xi,x\rangle^2>0.$$
So $C$ is positive definite. Consequently, by the AM-GM inequality, 
we have
\begin{equation}\label{ineq-AM}
    \operatorname{tr}C=\lambda_1+\cdots+\lambda_n\geq n(\lambda_1\cdots\lambda_n)^{1/n}=n(\det C)^{1/n},
\end{equation}
where $\lambda_1,\ldots,\lambda_n$ are the eigenvalues of $C$. 
Therefore, it suffices to prove
\begin{equation}\label{ineq-detC}
    \det C\geq \operatorname{Vol}(T^n_V)^{-2}.
\end{equation}

For any given finite subset $F=\{\xi_1,\ldots,\xi_m\}\subset \mathcal{A}^*(f)\subset \Lambda^*_V$, set
$$C_F\triangleq\sum_{\alpha=1}^m4\pi^2 |f_{\xi_\alpha}|^2|\xi_\alpha|^2\,\xi_\alpha\xi_\alpha^T,
\qquad Q_F\triangleq\sum_{\alpha=1}^m4\pi^2 |f_{\xi_\alpha}|^2\xi_\alpha\xi_\alpha^T,$$
$${D}_F=2\pi\Big{(}|f_{\xi_1}|\,\xi_1,~ |f_{\xi_2}|\,\xi_2,~ \cdots, ~|f_{\xi_m}|\,\xi_m\Big),~~~~~\widetilde{D}_F=2\pi\Big{(}|f_{\xi_1}||\xi_1|\,\xi_1,~ |f_{\xi_2}||\xi_2|\,\xi_2,~ \cdots, ~|f_{\xi_m}||\xi_m|\,\xi_m\Big).$$
Note that there hold 
$$D_FD_F^T=Q_F,~~~~~~\widetilde{D}_F\widetilde{D}_F^T=C_F.$$ 
By the Cauchy-Binet formula, we obtain
\begin{equation}\label{eq-QF}
\det Q_F=\sum_{
1\leq \alpha_1<\cdots<\alpha_n\leq m}\;\prod_{j=1}^n\left(4\pi^2|f_{\xi_{\alpha{_j}}}|^2\right)\det(\xi_{\alpha_1},\ldots,\xi_{\alpha_n})^2.
\end{equation}
\begin{equation}\label{eq-CF}
 \det C_F=\sum_{
 1\leq \alpha_1<\cdots<\alpha_n\leq m}\;
 \prod_{j=1}^n\left(4\pi^2 |f_{\xi{_{\alpha{_j}}}}|^2|\xi_{\alpha{_j}}|^2\right)
\det(\xi_{\alpha_1},\ldots,\xi_{\alpha_n})^2,
\end{equation}
It follows from Hadamard's inequality that 
\begin{equation}\label{ineq-Har-V}
    \prod_{j=1}^n|\xi_{\alpha_j}|\geq|\det(\xi_{\alpha_1},\ldots,\xi_{\alpha_n})|
\geq\det(\Lambda^*_V)=\operatorname{Vol}(T^n_V)^{-1}.
\end{equation}
Substituting \eqref{ineq-Har-V} into \eqref{eq-CF}, together with \eqref{eq-QF}, we obtain
$$
\det C_F\geq \operatorname{Vol}(T^n_V)^{-2}\det Q_F.
$$

Since $C - C_F$ is positive semi-definite, Lemma \ref{lemma-matrix} yields
\[
\det C \geq \det C_F.
\]
Consequently, for every finite subset $F \subset \mathcal{A}^*(f)$, we have
\[
\det C \geq \operatorname{Vol}(T^n_V)^{-2} \det Q_F.
\]
Now choose an exhaustive sequence of finite subsets $\{F_k\}$ of $\mathcal{A}^*(f)$.
 It follows from \eqref{eq-int-iso} that
\[
\lim_{k \to +\infty} Q_{F_k} = I_n.
\]
Moreover, since $f$ is smooth, the defining series for $C$ converges absolutely, and hence $C_{F_k}\rightarrow C$ as $k\rightarrow\infty$.
By the continuity of the determinant, we obtain
\[
\det C \geq \operatorname{Vol}(T^n_V)^{-2} \lim_{k \to +\infty} \det Q_{F_k} = \operatorname{Vol}(T^n_V)^{-2}.
\]


We now consider the equality case. 

{\bf Claim.} {\em If $\det C = \operatorname{Vol}(T^n_V)^{-2}$, then for any linearly independent vectors $\xi_{1},\ldots,\xi_{n}\in\mathcal A^*(f)$, we have 
\begin{equation}\label{eq-equality}
\prod_{j=1}^n|\xi_{j}|=|\det(\xi_{1},\ldots,\xi_{n})|
=\det(\Lambda^*_V)=\operatorname{Vol}(T^n_V)^{-1}.
\end{equation}}

We prove the claim by contradiction. Suppose that there exists a linearly independent set $\{\tau_1,\ldots,\tau_n\}\subset \mathcal A^*(f)$ such that
$$\delta\triangleq\prod_{j=1}^n|\tau_j|^2-\operatorname{Vol}(T^n_V)^{-2}>0.$$
Then we have
\[
\prod_{j=1}^n 4\pi^2 |f_{\tau_j}|^2 \det(\tau_1, \ldots, \tau_n)^2 \, \delta > 0.
\]
Choose $k_0$ sufficiently large such that $\{\tau_1, \ldots, \tau_n\} \subset F_k$ for all $k \geq k_0$. This implies that
\[
\det C_{F_k} - \operatorname{Vol}(T^n_V)^{-2} \det Q_{F_k}
\geq \prod_{j=1}^n 4\pi^2 |f_{\tau_j}|^2 \det(\tau_1, \ldots, \tau_n)^2 \, \delta > 0,
\qquad k \geq k_0.
\]
This contradicts the limit
\[
\lim_{k \to +\infty} \bigl( \det C_{F_k} - \operatorname{Vol}(T^n_V)^{-2} \det Q_{F_k} \bigr)
= \det C - \operatorname{Vol}(T^n_V)^{-2} = 0.
\]
Therefore, the claim follows. 

Observe that in \eqref{eq-equality}, the first equality implies that $\xi_1, \ldots, \xi_n$ are mutually orthogonal, and the second implies that  $\{\xi_1, \ldots, \xi_n\}$  generate $\Lambda_V^*$. 

By Lemma~\ref{lemma-iso condition}, there exists at least one set of linearly independent vectors $\{\eta_1, \ldots, \eta_n\} \subset \mathcal{A}^*(f)$ such that \eqref{eq-equality} holds. 
Then for any  vector $\xi\in \mathcal{A}^*(f)$, we can write 
$$\xi=\sum_{i=1}^n m_i\eta_i,\qquad m_i\in\mathbb Z.$$
Without loss of generality, assume that $m_1 \neq 0$. Since $\{\xi, \eta_2, \dots, \eta_n\}$ also forms a linearly independent set in $\mathcal{A}^*(f)$, the vectors $\xi, \eta_2, \dots, \eta_n$ are mutually orthogonal and generate $\Lambda_V^*$. 
Hence, $\xi = \pm \eta_1$, and we conclude that
\[
\mathcal{A}^*(f) = \{\pm\eta_1, \dots, \pm\eta_n\}.
\]
Up to an orthogonal transformation, we may assume that each $\eta_j$ is parallel to the standard basis vector $e_j$. 
Then, using the isometric condition
\[
I_n = \sum_{i=1}^n 4\pi^2 \bigl( |f_{\eta_i}|^2 + |f_{-\eta_i}|^2 \bigr) \eta_i \eta_i^T,
\]
we obtain
\[
4\pi^2 \bigl( |f_{\eta_i}|^2 + |f_{-\eta_i}|^2 \bigr) |\eta_i|^2 = 1, \qquad 1 \leq i \leq n.
\]
Consequently,
\[
C = \sum_{i=1}^n 4\pi^2 \bigl( |f_{\eta_i}|^2 + |f_{-\eta_i}|^2 \bigr) |\eta_i|^2 \eta_i \eta_i^T = \sum_{i=1}^n \eta_i \eta_i^T
\]
is similar to $\operatorname{diag}\{ |\eta_1|^2, |\eta_2|^2, \dots, |\eta_n|^2 \}$.
Thus, $|\eta_1|^2, \dots, |\eta_n|^2$ are the eigenvalues of $C$. That these eigenvalues are all equal follows from the equality case characterized in {\eqref{ineq-AM}}. 
\end{proof}
\begin{proof}[Proof of Theorem \ref{thm-forall-n}]
 By M\"{o}bius invariance of the Willmore functional, after replacing $f$ by $\varphi\circ f$, we may assume that $f$ is isometric. Up to a translation, one can assume that the center of gravity of $T_V^n$ is at the origin of $\R^N$. Combining  \eqref{eq-Holder}, \eqref{eq-H} and \eqref{ineq-HV-estimate}, we obtain 
\begin{equation}\label{ineq-Chen-conj}
   \begin{split}
       \int_{T^n_V}|H|^n\mathrm{d}V
       \geq \left(\operatorname{Vol}(T^n_V)\right)^{1-\frac{n}{2}}\left(\frac{4\pi^2}{n}\operatorname{Vol}(T_V^n)^{\frac{n-2}{n}}\right)^\frac{n}{2}=\left(\frac{4\pi^2}{n}\right)^{\frac{n}{2}}.
   \end{split}
\end{equation}
Together with (\ref{eq-W-iso}), it follows that
$$\W(f)\geq \left(4n\pi^2\right)^\frac{n}{2}.$$

We now discuss the equality case, which requires that equality holds simultaneously in \eqref{eq-Holder} and \eqref{ineq-HV-estimate}. This is equivalent to the condition that $|H|$ is constant and
\[
\Lambda_V^* = \operatorname{Span}_\mathbb{Z}\{\eta_1, \eta_2, \dots, \eta_n\}, \qquad \mathcal{A}^*(f) = \{\pm\eta_1, \dots, \pm\eta_n\},
\]
where $\eta_1, \dots, \eta_n \in \mathbb{R}^n$ are mutually orthogonal vectors of the same length. Denote $|\eta_i|=\ell$. Then $(T_V^n,g_0)$ is isometric to
$\mathbb R^n/(\ell^{-1}\mathbb Z)^n$. Since the Willmore energy is invariant under homotheties, we may assume without loss of generality that $\ell=1$. Then the conclusion follows from Lemma~\ref{lem-Cliff}. 
\end{proof}

For a minimal flat $n$-torus $f:T^n_V\rightarrow \mathbb{S}^N$, regarded as a submanifold of $\mathbb{R}^{N+1}$, we have $|H|=1$ and the Willmore functional satisfies $\mathcal{W}(f)=n^n \mathcal{V}(f)$. Applying Theorem~\ref{thm-forall-n} to this case yields the following sharp lower bound for $\mathcal{V}(f)$.   
\begin{corollary}
    \label{thm-mini-iso}
For a minimal isometric immersion $f:(T^n_V,e^{2\rho_0}g_0)\rightarrow \mathbb{S}^N$, its volume satisfies
\begin{equation}
    \label{eq:V}
\mathcal{V}(f)\ge \left(\frac{2\pi}{\sqrt n}\right)^n,     
\end{equation}
with equality holding if and only if 
$f$ is congruent to the Clifford $n$-torus in $\mathbb{S}^{2n-1}$. 
\end{corollary}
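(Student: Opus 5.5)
The plan is to reduce Corollary~\ref{thm-mini-iso} to Theorem~\ref{thm-forall-n} by viewing $f$ as an immersion into $\mathbb R^{N+1}$ via the standard inclusion $\mathbb S^N\subset\mathbb R^{N+1}$.

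\emph{Step 1: normalize the metric.} The induced metric is $e^{2\rho_0}g_0$ with $\rho_0$ constant. I replace $V$ by $e^{\rho_0}V$. Then $(T^n_V,e^{2\rho_0}g_0)$ is isometric to $(T^n_{e^{\rho_0}V},g_0)$, so $f$ becomes an isometric immersion of a flat torus with the standard metric $g_0$. The hypothesis of Theorem~\ref{thm-forall-n} then holds with $\varphi$ the identity.

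\emph{Step 2: compute the Willmore energy.} Since $f$ is minimal in $\mathbb S^N$, its mean curvature vector in $\mathbb R^{N+1}$ is $H=-f$, so $|H|\equiv1$. Flatness and the Gauss equation give \eqref{eq-W-iso}, hence $\W(f)=n^n\mathcal V(f)$.

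\emph{Step 3: apply the theorem.} Theorem~\ref{thm-forall-n} gives $n^n\mathcal V(f)\ge(4n\pi^2)^{n/2}$. Dividing by $n^n$ yields
\[
\mathcal V(f)\ge\left(\frac{4\pi^2}{n}\right)^{n/2}=\left(\frac{2\pi}{\sqrt n}\right)^n,
\]
which is \eqref{eq:V}.

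\emph{Step 4: the equality case.} This is the step needing care, since Theorem~\ref{thm-forall-n} only yields Möbius congruence, while the corollary claims congruence in $\mathbb S^N$. I would instead use the proof of Theorem~\ref{thm-forall-n} directly, applied to the isometric $f$ from Step 1. Translation by the center of gravity is not needed, because $\Delta f=-nf$ already forces $f_0=0$.

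Equality forces equality in Lemma~\ref{lemma-H-estimate}. Hence $\Lambda^*$ is generated by orthogonal vectors $\eta_1,\dots,\eta_n$ of equal length $\ell$, and $\mathcal A^*(f)=\{\pm\eta_j\}$.

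The relation $\Delta f=-nf$ says every frequency present satisfies $4\pi^2|\xi|^2=n$, which pins down $\ell=\sqrt n/(2\pi)$. So no dilation is required: after an orthogonal change of coordinates on $\mathbb R^n$, the lattice is $(\ell^{-1}\mathbb Z)^n$. Rescaling coordinates by $\ell$ then identifies the situation with Lemma~\ref{lem-Cliff} on $T^n_{I_n}$, up to a homothety of both metric and map, so $f$ is a dilate of $\CC$ up to an orthogonal map of $\mathbb R^{N+1}$.

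Since $|f|=1$, the dilation factor is $1$. Therefore $f$ is congruent, by an element of $O(N+1)$ (an isometry of $\mathbb S^N$), to the Clifford $n$-torus in a totally geodesic $\mathbb S^{2n-1}\subset\mathbb S^N$.

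Conversely, the Clifford torus $\CC$ has volume $(2\pi/\sqrt n)^n$, so it attains equality.
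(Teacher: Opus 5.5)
Your proof is correct. It follows the route the paper only mentions in passing (``It is a corollary of Theorem~\ref{thm-forall-n}''), not the argument the paper actually writes out.

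\textbf{What your route does.} You get the inequality by specializing the full machinery of Theorem~\ref{thm-forall-n} to the case $|H|\equiv 1$: H\"older, the Fourier identity \eqref{eq-H}, and Lemma~\ref{lemma-H-estimate} with its Cauchy--Binet, AM--GM and finite-truncation limit. You correctly notice that the theorem by itself only gives M\"obius congruence, so you re-enter its proof. There, $\Delta f=-nf$ kills $f_0$ and pins the frequency length $\ell=\sqrt n/(2\pi)$. Then $|f|=1$ pins the dilation in Lemma~\ref{lem-Cliff}. This upgrades the conclusion to genuine congruence by an element of $O(N+1)$.

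\textbf{What the paper does.} Its proof is more elementary and never passes through the Willmore inequality. By Takahashi's theorem, every frequency in $\mathcal A(f)$ lies on the single sphere $4\pi^2|\xi|^2=ne^{2\rho_0}$. Lemma~\ref{lemma-iso condition} supplies $n$ linearly independent frequencies. The sublattice they generate has covolume at least that of $\Lambda_V^*$, namely $\operatorname{Vol}(T^n_V)^{-1}$. Hadamard's inequality bounds that covolume by $(e^{\rho_0}\sqrt n/2\pi)^n$. Since $\mathcal V(f)=e^{n\rho_0}\operatorname{Vol}(T^n_V)$, this gives \eqref{eq:V} directly.

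\textbf{Comparison.} The paper's argument needs no H\"older, no determinant of an infinite sum and no limiting argument. In its equality case the ``equal length'' conclusion is automatic, whereas in Lemma~\ref{lemma-H-estimate} it must be extracted from the AM--GM equality. Your approach buys conceptual economy: the corollary is visibly the $|H|\equiv1$ case of the general sharp bound. The paper's buys a self-contained few-line argument that handles the conformal factor $e^{2\rho_0}$ directly and makes the rigidity transparent.
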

  \begin{proof}
It is a  corollary of Theorem~\ref{thm-forall-n}. Here we include a  simple proof. Consider the Fourier expansion of $f$ in (\ref{eq-FS}). Since $f$ is a minimal immersion into the unit sphere $\mathbb{S}^N$, by Takahashi theorem, we have
$$\sum_{\xi\in\Lambda_V^*}\bigl(-4\pi^2|\xi|^2e^{-2\rho_0}+n\bigr)f_\xi e^{2\pi i\langle \xi,x\rangle}=0.$$
By the linear independence of the Fourier modes, for every $\xi\in\Lambda_V^*$, it follows that
$$\bigl(-4\pi^2|\xi|^2e^{-2\rho_0}+n\bigr)f_\xi=0.$$
Therefore, if $f_\xi\neq 0$, then $4\pi^2|\xi|^2=ne^{2\rho_0}$, that is, 
$|\xi|=\frac{e^{\rho_0}\sqrt n}{2\pi}.$
By Lemma \ref{lemma-iso condition}, we can choose $n$ linearly independent frequencies
$\eta_1,\ldots,\eta_n\in\mathcal{A}(f)$, such that $|\eta_j|=\frac{e^{\rho_0}\sqrt n}{2\pi}.$
Since $\eta_1,\ldots,\eta_n$ are elements of the lattice $\Lambda_V^*$, they generate a sublattice of $\Lambda_V^*$. Hence
\begin{equation}\label{eq-lattice}
    \det(V^*)\le |\det(\eta_1,\ldots,\eta_n)|\le|\eta_1|\cdots|\eta_n|=\left(\frac{e^{\rho_0}\sqrt n}{2\pi}\right)^n.
\end{equation}
Here $V^*=(V^{-1})^T$ and the second inequality follows from Hadamard's inequality.  Since $f$ is isometric, we obtain
$\mathcal{V}(f)=\int_{T^n_V}\mathrm{d}V=e^{n\rho_0}\operatorname{Vol}(T^n_V).$ 
Since $\operatorname{Vol}(T^n_V)=\det(V)={\det(V^*)}^{-1}$,  we get \eqref{eq:V}.
The equality case is the same as before and the proof is finished.  \end{proof}
    
\begin{remark}\rm
 Recently, in  \cite{L-W-X2025}, L\"{u}-Wang-Xie investigated which flat $n$-tori admit minimal isometric immersions into spheres and showed that there are numerous minimal flat $n$-tori in $\mathbb{S}^N$. 
This indicates that the lower bound \eqref{eq:V} for the volume of minimal flat $n$-tori is non-trivial; in particular, it characterizes the Clifford $n$-torus in $\mathbb{S}^{2n-1}$ among all minimal flat $n$-tori. 
\end{remark}

\begin{remark}\rm
For surfaces, Kusner's conjecture asserts that the Willmore energy of $\mathbb{R}P^2$ is the smallest among all surfaces except $\mathbb{S}^2$ \cite{K-1996}. In higher-dimensional submanifold theory, however, it is not known which topological type should realize the least Willmore energy among those different from $\mathbb{S}^n$. Guo-Li-Wang proposed the following conjecture in \cite{GuoLiWang2001}.
\begin{conjecture}[{\rm Guo-Li-Wang}]
 Let $M_k$ be an $n$-dimensional closed manifold homeomorphic to $\SB^k\times \SB^{n-k}$. If $f:M_k\to \SB^{n+1}$ is an embedding, then
$$\mathcal W(f)\ge B_{n,k}\triangleq\frac{4\pi^{\frac{n+2}{2}}(n-k)^{\frac{k}{2}}k^{\frac{n-k}{2}}}{n^{\frac n2-2}(n-1)\Gamma\left(\frac{k+1}{2}\right)\Gamma\left(\frac{n-k+1}{2}\right)}.$$
\end{conjecture}

This conjecture provides a lower bound for the Willmore energy of $\mathbb{S}^k\times \mathbb{S}^{n-k}$-type submanifolds. It is natural to compare it with the flat torus case.
Set $C_n:=(4n\pi^2)^{n/2}$. For $n=2$, we have $B_{2,1}=C_2=8\pi^2$. For $n\ge 3$, however, 
\[
B_{n,k}<C_n \qquad (1\le k\le n-1).
\]
Hence, in higher dimensions, the Willmore lower bound for the $n$-torus might be strictly larger than that for the $\mathbb{S}^k\times \mathbb{S}^{n-k}$ case.  We refer to \cite{L-2026-1,Pinkall1986} for some further discussions concerning this problem.

\end{remark}

\section{Li--Yau-Type Estimates for Flat $n$-Tori with Korkine--Zolotarev reduction}\label{section-KZ}

In Li-Yau's proof of the Willmore conjecture for flat $2$-tori, a key point can be stated as an algebraic inequality of lattice vectors. In this section, we generalize this inequality to lattices of rank $n = 3, 4, 5$,  thereby providing an alternative proof of Theorem~\ref{thm-forall-n} for flat $n$-tori when $3\leq n\leq 5$. However, this approach does not provide a sharp inequality when $n \geq 6$, as the algebraic inequality fails in those ranks.

\subsection{The Korkine-Zolotarev reduced lattice bases}
~

We first recall the Korkine-Zolotarev lattice basis reduction; see \cite{Korkine--Zolotarev1873,M-G,P2007} for further details.

Consider the lattice $\Lambda_R=R\mathbb{Z}^n$ generated by the  column vectors $\{R_1, \dots, R_n\}$ of $R$. Denote by $R_i^\perp$ the orthogonal projection of $R_i$ onto $\mathrm{Span}\{R_1, \dots, R_{i-1}\}^\perp$. 
The basis $\{R_1, \dots, R_n\}$ is said to be Korkine-Zolotarev reduced if and only if 
\begin{itemize}
    \item[(1)] for any $1 \leq i < j \leq n$, the orthogonal projection of $R_j$ onto the direction of $R_i^\perp$ has a length not exceeding $\frac{1}{2}\|R_i^\perp\|$ (i.e., ${|\langle R_j, R_i^\perp \rangle|} \leq \frac{1}{2}{\|R_i^\perp\|^2}$); 
    \item[(2)] $R_1$ is one of the shortest non-zero vectors of $\Lambda_R$;
    \item[(3)] for any $2\leq i\leq n$, the projected vector $R_i^\perp$ is one of the shortest non-zero vectors in the orthogonal projection of $\Lambda_R$ onto $\mathrm{Span}\{R_1, \dots, R_{i-1}\}^\perp$. 
\end{itemize} 
By this geometric characterization and the properties of Gram-Schmidt orthogonalization, it is well known that every lattice admits a Korkine-Zolotarev reduced basis.

In what follows, we always assume that the column vectors of $A$ form a Korkine-Zolotarev reduced basis of the lattice $\Lambda_A$. Note that, up to an orthogonal transformation and a dilation, we may assume that $A = (a_{ij})_{1 \leq i, j \leq n}$ is an upper triangular matrix whose diagonal entries are
\begin{equation}\label{eq-inv}
a_{11} = |A_1| = 1, \qquad a_{ii} = |A_i^\perp|, \quad 2 \leq i \leq n.
\end{equation}
Moreover, $A$ satisfies the following two conditions,
\begin{equation}\label{ineq-rij}
    |a_{ij}|\leq \frac{1}{2}a_{ii}, \quad 1\leq i < j\leq n,
\end{equation}
and
\begin{equation}\label{ineq-rii}
    \sum_{k=i}^{n}\left(\sum_{j=k}^{n}m_j a_{kj}\right)^2\geq a_{ii}^2, \quad 1\leq i\leq n, 
\end{equation}
for any $(m_i,\dots,m_n)^T\in\mathbb Z^{n-i+1}\setminus\{\vec{0}\}$. 
From the definition, the following lemma follows immediately. 
\begin{lemma}\cite{Korkine--Zolotarev1873}
The diagonal entries of $A$ satisfy
\begin{equation}\label{eq-Korkine--Zolotarev-1}
    a^2_{i+1,i+1}\geq\frac{3}{4}a^2_{ii},\qquad 1\leq i\leq n-1,
\end{equation}
and
\begin{equation}\label{eq-Korkine--Zolotarev-2}
     a^2_{i+2,i+2}\geq\frac{2}{3}a^2_{ii},\qquad 1\leq i\leq n-2.
\end{equation}
\end{lemma}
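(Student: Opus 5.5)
The plan is to derive both inequalities from conditions \eqref{ineq-rij} and \eqref{ineq-rii} alone. Fix $i$ and look only at the rows $i,i+1,i+2$ of the upper triangular matrix $A$. Every vector $\sum_{j\ge i} m_jA_j$ projects onto $\mathrm{Span}\{A_1,\dots,A_{i-1}\}^\perp$. Condition \eqref{ineq-rii} says each nonzero such projection has squared length at least $a_{ii}^2$. I will use this with a few well-chosen integer vectors $(m_i,m_{i+1},m_{i+2},0,\dots,0)$.

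For \eqref{eq-Korkine--Zolotarev-1}, take $m_{i+1}=1$ and all other $m_j=0$. Condition \eqref{ineq-rii} then gives $a_{i,i+1}^2+a_{i+1,i+1}^2\ge a_{ii}^2$. Condition \eqref{ineq-rij} gives $a_{i,i+1}^2\le \tfrac14 a_{ii}^2$. Together these yield $a_{i+1,i+1}^2\ge \tfrac34 a_{ii}^2$.

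For \eqref{eq-Korkine--Zolotarev-2}, write $a=a_{ii}$, $b=a_{i+1,i+1}$ and $c=a_{i,i+1}$. Consider the planar lattice $L$ generated by $(a,0)$ and $(c,b)$, which are the rows-$(i,i+1)$ parts of $A_i$ and $A_{i+1}$. By \eqref{ineq-rii}, its minimum is $a$, and in particular $b^2+c^2\ge a^2$. Replacing $A_{i+1}$ by $\pm A_{i+1}$ plus a multiple of $A_i$ does not change the lattice, so I may take $0\le c\le a/2$. Now let $q=(a_{i,i+2},a_{i+1,i+2})$. For $w\in L$ closest to $q$, the vector $A_{i+2}-w$, projected, has squared length $\mathrm{dist}(q,L)^2+a_{i+2,i+2}^2\ge a^2$. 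The squared distance $\mathrm{dist}(q,L)^2$ is at most the squared circumradius $R^2$ of the Delaunay triangle with vertices $0$, $(a,0)$, $(c,b)$. A direct computation gives
\[
R^2=\frac{a^2}{4}+\frac{(b^2+c^2-ac)^2}{4b^2}.
\]
This yields $a_{i+2,i+2}^2\ge a^2-R^2$. Separately, the first inequality applied at index $i+1$ gives $a_{i+2,i+2}^2\ge\tfrac34 b^2$.

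The main obstacle is the final optimization, namely showing that
\[
\max\left(a^2-R^2,\ \tfrac34 b^2\right)\ge \tfrac23 a^2
\]
under the constraints $0\le c\le a/2$ and $b^2+c^2\ge a^2$. The crude rounding bound $R^2\le(a^2+b^2)/4$ is not enough, since it only gives $9/16$; the exact circumradius and the constraint $b^2+c^2\ge a^2$ are needed. I would split into two cases. If $b^2\ge \tfrac89 a^2$, the second bound already gives $\tfrac34\cdot\tfrac89 a^2=\tfrac23 a^2$. If $b^2<\tfrac89a^2$, then $c^2\ge a^2-b^2$ forces $c$ away from $0$. In that case I would bound $(b^2+c^2-ac)^2/b^2$ from above, using that it is monotone in $c$ on the admissible interval and that the extreme case is the hexagonal one, $c=a/2$, $b^2=\tfrac34a^2$. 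There $R^2=a^2/3$, giving exactly $a_{i+2,i+2}^2\ge\tfrac23a^2$. Checking this monotonicity and the boundary cases carefully is where I expect the real work to lie.
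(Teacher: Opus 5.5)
Your proof of \eqref{eq-Korkine--Zolotarev-1} is correct. It is the one-line argument the paper has in mind; the paper itself only cites Korkine--Zolotarev and says the lemma follows from the definition. Your set-up for \eqref{eq-Korkine--Zolotarev-2} is also sound: $a_{i+2,i+2}^2\ge a^2-\mathrm{dist}(q,L)^2$ holds, $\mathrm{dist}(q,L)\le R$ holds because the triangle $0,(a,0),(c,b)$ is non-obtuse, and your formula for $R^2$ is right.

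\textbf{The gap is the final step.} The inequality $\max(a^2-R^2,\tfrac34 b^2)\ge\tfrac23a^2$ is false on the admissible region. Take $a=1$, $c=\tfrac25$, $b^2=\tfrac{21}{25}$, so that $b^2+c^2=1$ and $0\le c\le\tfrac12$. Then $R^2=\tfrac14+\tfrac{(3/5)^2}{4\cdot 21/25}=\tfrac5{14}$, so $a^2-R^2=\tfrac9{14}$ and $\tfrac34b^2=\tfrac{63}{100}$, both strictly below $\tfrac23$. The minimum of your maximum over the region is about $0.639\,a^2$.

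Your monotonicity heuristic points the wrong way. For fixed $b$, $b^2+c^2-ac$ is positive and decreasing in $c\in[0,a/2]$. So $R$ is largest at the smallest admissible $c=\sqrt{a^2-b^2}$, not at $c=a/2$; the hexagonal lattice in fact minimizes the covering radius.

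The structural reason is that your two bounds are sharp at different positions of $q$. The bound $a^2-R^2$ is attained only when $q$ is a deep hole of $L$, whose second coordinate is $(b^2+c^2-ac)/(2b)$. The bound $\tfrac34b^2$ is attained only when $q_2\equiv b/2 \pmod b$. Once the two are decoupled, no case analysis recovers $\tfrac23$. Hermite's constant $\gamma_3^3=2$ combined with $\tfrac34b^2$ does not help either; it only gives $\sqrt{3/8}\,a^2$.

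\textbf{How to repair it.} Keep $q=(q_1,q_2)$ and apply both bounds at the same $q$, with $d=a_{i+2,i+2}$:
\begin{itemize}
\item[(a)] $d^2\ge a^2-\mathrm{dist}(q,L)^2$;
\item[(b)] $d^2\ge b^2-\mathrm{dist}(q_2,b\Z)^2$, which is \eqref{ineq-rii} at index $i+1$ with $m_{i+2}=1$ and $m_{i+1}$ arbitrary.
\end{itemize}
Then argue as follows.
\begin{itemize}
\item[(1)] Normalize $a=1$. Make $c\ge0$ by a reflection, and make $0\le q_2\le b/2$ using $q\mapsto-q$ and translations by $L$.
\item[(2)] If $b^2\ge\tfrac89$, the bound $\tfrac34b^2$ already gives $\tfrac23$. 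Otherwise set $s:=\sqrt{1-b^2}$; then $s\in(\tfrac13,\tfrac12]$ and $c\ge s$.
\item[(3)] Suppose $d^2<\tfrac23$. By (b), $q_2>y_*:=\sqrt{b^2-2/3}=\sqrt{1/3-s^2}$.
\item[(4)] It then suffices to show that every point at height $y\in[y_*,b/2]$ lies within $1/\sqrt3$ of $\Z\times\{0\}$ or of $(c+\Z)\times\{b\}$. This amounts to $\sqrt{1/3-y^2}+\sqrt{1/3-(b-y)^2}\ge 1-c$.
\item[(5)] The left side is concave and symmetric about $b/2$, so it is minimal at $y=y_*$. There its first term equals $s\le c$.
\item[(6)] The remaining inequality $\sqrt{1/3-(b-y_*)^2}\ge 1-2s$ reduces, after squaring twice, to $(1-2s)(3s-1)\ge0$, which holds on $(\tfrac13,\tfrac12]$.
\end{itemize}
Hence $\mathrm{dist}(q,L)^2\le\tfrac13$, and (a) gives $d^2\ge\tfrac23$, a contradiction. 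Equality occurs at $s=\tfrac12$ (the face-centred cubic lattice), so $\tfrac23$ is sharp.
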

Now we state the key algebraic inequality established by Li-Yau in \cite{Li-Yau} for flat $2$-tori. 
\begin{lemma}\label{le-ly}
    Let $(T_A^2,g_0)$ be a flat $2$-torus. For any $\xi=(\xi_1, \xi_2)^T\in{\Lambda}^*_A\setminus\{\vec{0}\}$, we have
    \begin{equation}\label{eq-point-estimate-ly}
        |\xi|^4\geq\xi_1^2+\frac{\xi_2^2}{a^2_{22}},
    \end{equation}
where $a_{22}$ is the invariant of $\Lambda_A$ defined in \eqref{eq-inv}. 
The equality holds if and only if 
$\xi=\pm(1,0)$ with $a_{12}=0$ or $\xi=\pm(0,\frac{1}{a_{22}})$. 
\end{lemma}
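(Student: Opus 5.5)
The plan is to make the dual lattice explicit and then prove the inequality by a short case split on the first coordinate of $\xi$. By the normalisation \eqref{eq-inv}, we may take
\[
A=\begin{pmatrix}1&a_{12}\\0&a_{22}\end{pmatrix},\qquad |a_{12}|\le \tfrac12 \text{ by \eqref{ineq-rij}},\qquad a_{22}^2\ge \tfrac34 \text{ by \eqref{eq-Korkine--Zolotarev-1}}.
\]
Since $\Lambda_A^*=(A^{-1})^T\mathbb Z^2$ and
\[
(A^{-1})^T=\begin{pmatrix}1&0\\-a_{12}/a_{22}&1/a_{22}\end{pmatrix},
\]
every $\xi\in\Lambda_A^*$ has the form
\[
\xi=\Bigl(m_1,\ \frac{m_2-a_{12}m_1}{a_{22}}\Bigr)^T,\qquad (m_1,m_2)\in\mathbb Z^2 .
\]

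\emph{Case $m_1=0$.} Here $\xi=(0,m_2/a_{22})$ with $m_2\neq 0$. The inequality \eqref{eq-point-estimate-ly} reads
\[
\frac{m_2^4}{a_{22}^4}\ \ge\ \frac{m_2^2}{a_{22}^4},
\]
which holds because $m_2^2\ge 1$. Equality holds exactly when $|m_2|=1$, i.e.\ when $\xi=\pm(0,1/a_{22})$.

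\emph{Case $m_1\neq 0$.} Put $u=\xi_1^2=m_1^2\ge 1$ and $v=\xi_2^2\ge 0$. Then
\[
|\xi|^4-\xi_1^2-\frac{\xi_2^2}{a_{22}^2}=(u^2-u)+v\Bigl(2u-\frac1{a_{22}^2}\Bigr)+v^2 .
\]
Each term on the right is nonnegative. Indeed, $u^2\ge u$ because $u\ge 1$. For the middle term, $2u\ge 2>\tfrac43\ge a_{22}^{-2}$, using the Korkine--Zolotarev bound $a_{22}^2\ge\tfrac34$. This is the only point where reduction is used, and it is where the constant matters: we need $a_{22}^{-2}<2$, and $\tfrac43$ leaves room.

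Equality forces $u=1$ and $v=0$. So $|m_1|=1$ and $m_2=a_{12}m_1$. Hence $a_{12}$ is an integer, and $|a_{12}|\le\tfrac12$ then gives $a_{12}=0$ and $m_2=0$, i.e.\ $\xi=\pm(1,0)$. Conversely, the two listed equality cases are checked directly.

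I do not expect a real obstacle. The only care needed is to track the strictness in the middle term so that the equality characterisation is exact, and to verify that the dual-lattice parametrisation matches the transpose convention of the paper.
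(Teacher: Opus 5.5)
Your proof is correct, and it is essentially the paper's approach: the paper only cites Li--Yau for this lemma, but your identity $|\xi|^4-\xi_1^2-\xi_2^2/a_{22}^2=(u^2-u)+v\bigl(2u-a_{22}^{-2}\bigr)+v^2$ is exactly the $m=2$ case of the splitting $\zeta_m^2=\zeta_{m-1}^2+(2\zeta_{m-1}+\xi_m^2)\xi_m^2$ in the proof of Proposition~\ref{lemma-point-estimate}, with $u^2\ge u$ as the base step and your two cases corresponding to $\zeta_1=0$ and $\zeta_1\neq0$ there. One small simplification: for the equality case the term $v^2$ already forces $v=0$, so you never need strictness of the middle coefficient (only $a_{22}^{-2}\le 2$ is needed for the inequality), while $|a_{12}|\le\frac12$ is a second use of the reduction, in the equality characterisation.
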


\vspace{1mm}
 
\subsection{Generalization of Li-Yau's algebraic inequality to lattices of rank 
$n\in\{3,4,5\}$}
~

\begin{proposition}\label{lemma-point-estimate}
    Let $\Lambda_A = A\mathbb{Z}^n$ be a lattice of rank $n \in \{3, 4, 5\}$, where the  column vectors of $A$ form a Korkine-Zolotarev reduced basis. For any $\xi=(\xi_1,\ldots,\xi_n)^T \in \Lambda^*_A\setminus\{\vec{0}\}$, we have
\begin{equation}\label{eq-point-estimate}
    \|\xi\|^4 \geq \sum_{i=1}^n \frac{\xi_i^2}{a^2_{ii}},
\end{equation}
where the $a_{ii}$ denote the invariants defined in \eqref{eq-inv}. Furthermore, for a fixed $\xi\in\Lambda_A^*\setminus\{0\}$, equality holds if and only if
$\xi=\pm\frac{e_i}{a_{ii}}$ for some $i\in\{1,\ldots,n\}$ such that $a_{ij}=0, j>i$. In particular, if equality holds for a collection of vectors spanning
$\mathbb R^n$, then $A$ is diagonal.
\end{proposition}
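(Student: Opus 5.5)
Set $A$ upper triangular with $a_{11}=1$, as in \eqref{eq-inv}. The dual lattice is $\Lambda_A^*=(A^{-1})^T\mathbb Z^n$, and $(A^{-1})^T$ is lower triangular with diagonal entries $1/a_{ii}$. Writing $\xi=(A^{-1})^Tm$ with $m\in\mathbb Z^n\setminus\{0\}$, we have $A^T\xi=m$, which reads
\[
m_j=\sum_{i\le j}a_{ij}\xi_i ,\qquad j=1,\dots,n.
\]
Let $k$ be the largest index with $\xi_k\neq0$, equivalently the largest index with $m_k\neq 0$. Then $|\xi_k|=|m_k|/a_{kk}\ge 1/a_{kk}$. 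So in each coordinate direction $\xi$ is either $0$ or at least a fixed size, and the goal is to compare $\|\xi\|^4=(\sum\xi_i^2)^2$ with $\sum\xi_i^2/a_{ii}^2$.

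The plan is a case analysis on $k$. The point is to show $\|\xi\|^2\ge \max_{i\le k}1/a_{ii}^2$, which then gives
\[
\|\xi\|^4=\|\xi\|^2\cdot\|\xi\|^2\ge\|\xi\|^2\max_{i\le k}a_{ii}^{-2}\ge\sum_{i\le k}\xi_i^2/a_{ii}^{-2}\cdot a_{ii}^{-2}\cdot a_{ii}^{2}\ge\sum_i\xi_i^2/a_{ii}^2 .
\]
So it suffices to prove $\|\xi\|^2\ge a_{ii}^{-2}$ for every $i\le k$, i.e. a lower bound for dual vectors in terms of the primal Gram--Schmidt lengths. By \eqref{eq-Korkine--Zolotarev-1} and \eqref{eq-Korkine--Zolotarev-2}, the $a_{ii}$ cannot decrease too fast, but they may increase. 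The crude bound $\|\xi\|^2\ge\xi_k^2\ge a_{kk}^{-2}$ only handles the index $i=k$. For $i<k$ with $a_{ii}<a_{kk}$, I would use the remaining coordinates: the relations $m_j=\sum_{i\le j}a_{ij}\xi_i$ together with \eqref{ineq-rij} force the earlier $\xi_i$ to be large whenever $\xi_k$ is small relative to $1/a_{ii}$.

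Concretely, I would iterate downward from $k$. If $|\xi_k|<1/a_{k-1,k-1}$, then $m_{k-1}$ is an integer, and we have
\[
a_{k-1,k-1}\xi_{k-1}=m_{k-1}-\sum_{i<k-1}a_{i,k-1}\xi_i ,
\]
while $m_k=a_{kk}\xi_k+\sum_{i<k}a_{ik}\xi_i$ with $|a_{ik}|\le a_{ii}/2$. From these one extracts $\xi_{k-1}^2+\xi_k^2\ge c/a_{k-1,k-1}^2$, and one needs $c\ge1$. The Korkine--Zolotarev constants $3/4$ and $2/3$ are exactly what enter such two-step and three-step estimates. Alternatively, and I think more cleanly, the KZ minimality condition \eqref{ineq-rii} can be dualized: projecting onto the span of the first $i$ coordinates gives a sublattice whose dual is a projection of $\Lambda_A^*$. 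I expect that working within a sublattice of rank at most $5$ with KZ bounds, every nonzero dual vector $\xi$ with $\xi_k\ne0$ satisfies $\|\xi\|^2\ge a_{ii}^{-2}$ for $i\le k$. This is where $n\le5$ is used: the accumulated ratio $a_{kk}^2/a_{ii}^2$ allowed by KZ, combined with the Hermite-type bound on the dual, stays compatible only in low rank. This is consistent with the failure for $n\ge6$ noted in Remark~\ref{rk-4.6}.

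The main obstacle is this inequality $\|\xi\|^2\ge\max_{i\le k}a_{ii}^{-2}$ for $k\le5$. I would first try to prove it by induction on $k$, reducing to the $(k-1)$-dimensional KZ lattice spanned by $A_1,\dots,A_{k-1}$, whose dual is the projection of $\Lambda_A^*$ onto the first $k-1$ coordinates. The inductive step requires handling the last coordinate $\xi_k$ via \eqref{ineq-rij} and the ratio bounds \eqref{eq-Korkine--Zolotarev-1}--\eqref{eq-Korkine--Zolotarev-2}, with a finite check of small $m_k$ values. For $n=2$ this recovers Lemma~\ref{le-ly}.

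For the equality case, equality forces equality in $\|\xi\|^2\ge a_{ii}^{-2}$ for every $i$ with $\xi_i\ne0$, and equality in the weighting step. That forces $\xi$ to have a single nonzero coordinate $\xi_i=\pm1/a_{ii}$. Then $A^T\xi=m$ integral gives $a_{ij}\xi_i\in\mathbb Z$ for $j>i$. Since $|a_{ij}|/a_{ii}\le1/2$, this leaves $a_{ij}/a_{ii}\in\{0,\pm\tfrac12\}$. The case $\pm\tfrac12$ must be excluded using strictness in the estimate, so $a_{ij}=0$. If such $\xi$ span $\mathbb R^n$, every index $i$ occurs, and hence $A$ is diagonal.
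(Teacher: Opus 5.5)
\textbf{There is a genuine gap: the reduction step is false.} You propose to prove $\|\xi\|^2\ge\max_{i\le k}a_{ii}^{-2}$, but this inequality does not hold. So the rest of the plan, including the equality analysis you route through it, has nothing to stand on.

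As literally stated it already fails for a rectangular lattice. Take $A=\mathrm{diag}(1,10,10)$ and $\xi=(0,0,\frac1{10})^T$: then $\|\xi\|^2=\frac1{100}<1=a_{11}^{-2}$.

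Restricting the maximum to indices with $\xi_i\neq0$ (which is all your weighting step needs) does not rescue it. Take
\[
A=\begin{pmatrix}1&0&\frac1{10}\\0&1&\frac12\\0&0&\frac{2}{\sqrt5}\end{pmatrix}.
\]
This basis is Korkine--Zolotarev reduced:
\begin{itemize}
\item every lattice vector with $m_3\neq0$ has squared length at least $\frac1{100}+\frac14+\frac45>1$;
\item its projection to $e_1^\perp$ has squared length at least $\frac14+\frac45>1$;
\item $|a_{13}|\le\frac12$ and $|a_{23}|\le\frac12a_{22}$.
\end{itemize}
The vector $\xi=(1,0,-\frac{\sqrt5}{20})^T$ satisfies $A^T\xi=(1,0,0)^T$, so $\xi\in\Lambda_A^*$ and $\xi_3\neq0$. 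Yet $\|\xi\|^2=\frac{81}{80}<\frac54=a_{33}^{-2}$. The proposition itself still holds here, since $(\frac{81}{80})^2>\frac{65}{64}=\sum_i\xi_i^2a_{ii}^{-2}$.

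The bound $\sum_i\xi_i^2a_{ii}^{-2}\le\|\xi\|^2\max_i a_{ii}^{-2}$ is simply too lossy. It charges the large weight $a_{33}^{-2}$ to the whole mass of $\xi$, although that weight sits on a tiny coordinate. For the same reason your explanation of where $n\le5$ enters (a Hermite-type bound on the dual) is not the right mechanism.

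\textbf{What the paper does instead.} The weights must stay attached to their coordinates. The paper inducts on $m$ with $\zeta_m=\sum_{i\le m}\xi_i^2$, using Li--Yau as the base case, and writes
\[
\zeta_m^2=\zeta_{m-1}^2+(2\zeta_{m-1}+\xi_m^2)\xi_m^2 .
\]
So it only has to show $2\zeta_{m-1}+\xi_m^2\ge a_{mm}^{-2}$ whenever $\xi_m\neq0$. The factor $2$ in front of $\zeta_{m-1}$ is exactly what absorbs the Korkine--Zolotarev ratios:
\begin{itemize}
\item if $\zeta_{m-1}=0$, then $\xi_m=k_m/a_{mm}$ with $k_m$ a nonzero integer;
\item if the first nonzero index is $l\ge3$, then $m-l\le2$, so $2\xi_l^2\ge2a_{ll}^{-2}\ge\frac43a_{mm}^{-2}$;
\item if $\zeta_2\neq0$, then $2\zeta_2\ge2a_{22}^{-2}\ge a_{mm}^{-2}$, because $a_{mm}^2\ge\frac12a_{22}^2$. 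This is where $m\le5$ is used.
\end{itemize}
Strictness in the last two cases then gives the equality characterization by recursion.

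\textbf{A small slip in your equality step.} From $a_{ij}/a_{ii}\in\mathbb Z$ and $|a_{ij}/a_{ii}|\le\frac12$ you get $a_{ij}=0$ directly. The values $\pm\frac12$ are not integers, so there is nothing further to exclude.
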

\begin{proof}
It follows from the definition that $\xi\in\Lambda^*_A$ if and only if $A^T\xi=k$ for some $k=(k_1,\ldots,k_n)^T\in\mathbb Z^n$, i.e.,  
\begin{equation}\label{eq-intergal}
    \sum_{i=1}^{j}a_{ij}\xi_i=k_j\in\mathbb Z,
\qquad 1\leq j\leq n.
\end{equation}

    For any $2 \leq m \leq n$, let $\zeta_m = \sum_{i=1}^m \xi_i^2$. We will prove by induction on $m$ that 
    \begin{equation}\label{eq-induction-hypothesis}
        \zeta_m^2 \geq \sum_{i=1}^m \frac{\xi_i^2}{a_{ii}^2}
    \end{equation}
    holds for all $2 \leq m \leq 5$, where the base case $m=2$ is the algebraic inequality \eqref{eq-point-estimate-ly} established by Li-Yau. 

    Assume the inequality holds for $m-1$ (where $3 \leq m \leq 5$). We now consider the case for $m$. Observe that
    \begin{equation}\label{eq-zeta-expansion}
        \zeta_m^2 = \left(\zeta_{m-1} + \xi_m^2\right)^2 = \zeta_{m-1}^2 + \left(2\zeta_{m-1} + \xi_m^2\right)\xi_m^2.
    \end{equation}
    By the induction hypothesis, 
    it suffices to show that
    \begin{equation}\label{eq-suffices-to-show}
        2\zeta_{m-1} + \xi_m^2 \geq \frac{1}{a_{mm}^2}, \quad \text{whenever } \xi_m \neq 0.
    \end{equation}
    We analyze this in three exhaustive sub-cases based on the non-zero components of $\xi$:
    
    \textbf{Case 1: $\zeta_{m-1} = 0$.} Then it follows from \eqref{eq-intergal} that 
    $\xi_m = \frac{k_m}{a_{mm}}$ for some non-zero integer $k_m$. Consequently,
    $$2\zeta_{m-1} + \xi_m^2 = \frac{k_m^2}{a_{mm}^2} \geq \frac{1}{a_{mm}^2}.$$
    
    \textbf{Case 2: $\zeta_{m-1} \neq 0$ but $\zeta_2 = 0$.} 
    This case only occurs when $m \in \{4, 5\}$. Let $l$ be the first index in $\{3,\cdots,  m-1\}$ such that $\xi_l = \frac{k_l}{a_{ll}} \neq 0$ for some integer $k_l$. By the Korkine-Zolotarev properties \eqref{eq-Korkine--Zolotarev-1} and \eqref{eq-Korkine--Zolotarev-2}, we can bound $a_{ll}^2$ relative to $a_{mm}^2$. Specifically, since $m-l\leq 5-3 = 2$, we have $a_{mm}^2 \geq \frac{2}{3}a_{ll}^2$ (by \eqref{eq-Korkine--Zolotarev-2} if $m-l=2$) or $a_{mm}^2 \geq \frac{3}{4}a_{ll}^2$ (by \eqref{eq-Korkine--Zolotarev-1} if $m-l=1$). In either case, $a_{mm}^2 \geq \frac{2}{3}a_{ll}^2$. Hence,
    $$2\zeta_{m-1} + \xi_m^2 > 2\xi_l^2 = \frac{2k_l^2}{a_{ll}^2} \geq \frac{2}{a_{ll}^2} \geq \frac{4}{3a_{mm}^2} \geq \frac{1}{a_{mm}^2}.$$
    
    \textbf{Case 3: $\zeta_2 \neq 0$.}
    In this case, we first show that 
    \begin{equation}\label{eq-xi12-estimate}
       \zeta_2= \xi_1^2+\xi^2_2\geq\frac{1}{a^2_{22}}.
    \end{equation}
    In fact, from \eqref{eq-intergal} and $a_{11}=1$, there exists $k_1,k_2\in\mathbb{Z}$ such that  
 $$\xi_1=k_1\qquad \xi_2=\frac{k_2-a_{12}k_1}{a_{22}}.$$
By (\ref{ineq-rii}), we have $a_{12}^2+a_{22}^2\geq a_{11}^2=1$. Hence
$$ \xi_1^2+\xi_2^2
=
k_1^2+\frac{(k_2-a_{12}k_1)^2}{a_{22}^2}
\geq
\frac{k_1^2(1-a_{12}^2)+(k_2-a_{12}k_1)^2}{a_{22}^2}\geq
\frac{k_1^2+k_2^2-2a_{12}k_1k_2}{a_{22}^2}.$$
Since $(\xi_1,\xi_2)\neq (0,0)$, we have $(k_1,k_2)\neq (0,0)$. Then \eqref{eq-xi12-estimate} follows from 
(\ref{ineq-rij}). 
    On the other hand, since $3\leq m \leq 5$, 
    it follows from \eqref{eq-Korkine--Zolotarev-1} and \eqref{eq-Korkine--Zolotarev-2} that 
    $$a_{mm}^2 \geq \frac{2}{3}a_{33}^2 \geq \frac{2}{3}\left(\frac{3}{4}a_{22}^2\right) = \frac{1}{2}a_{22}^2.$$ Then we obtain 
    $$2\zeta_{m-1} + \xi_m^2 > 2\zeta_2 \geq \frac{2}{a_{22}^2} \geq \frac{1}{a_{mm}^2}.$$
    
    Therefore, \eqref{eq-suffices-to-show} holds in all cases. Setting $m=n$ completes the proof of the inequality \eqref{eq-point-estimate}.

    We now characterize the equality cases. The expansion \eqref{eq-zeta-expansion} implies that equality in \eqref{eq-point-estimate} requires $(2\zeta_{n-1} + \xi_n^2)\xi_n^2 = \frac{\xi_n^2}{a_{nn}^2}$. Since we established that $2\zeta_{n-1} + \xi_n^2 > \frac{1}{a_{nn}^2}$ whenever $\zeta_{n-1} \neq 0$ and $\xi_n\neq 0$, equality can only occur if either $\xi_n = 0$ or $\zeta_{n-1} = 0$.

    If $\zeta_{n-1} = 0$, then $\xi = (0, \dots, 0, \xi_n)^T$. The equation $\|\xi\|^4 = \frac{\xi_n^2}{a_{nn}^2}$ simplifies to $\xi_n^4 = \frac{\xi_n^2}{a_{nn}^2}$. Since $\xi \neq 0$, this yields $\xi_n = \pm \frac{1}{a_{nn}}$, meaning $\xi = \pm \frac{e_n}{a_{nn}}$.

    If $\xi_n = 0$, the equality conditions reduce strictly to those in dimension $n-1$. Applying the same logic recursively, we conclude that $\xi$ must take the form $\xi = \pm a_{ii}^{-1}e_i$ for some $1 \leq i \leq n$.

    Finally, for a fixed index $i$, the condition $\xi = \pm a_{ii}^{-1}e_i \in \Lambda_A^*$ is equivalent to 
    $$A^T(a_{ii}^{-1}e_i) = \left(0, \dots, 0, 1, \frac{a_{i,i+1}}{a_{ii}}, \dots, \frac{a_{i,n}}{a_{ii}}\right)^T\in \mathbb{Z}^n.$$
    Consequently, we must have $\frac{a_{ij}}{a_{ii}} \in \mathbb{Z}$ for all $j > i$. However, the Korkine-Zolotarev reduction condition bounds the off-diagonal entries as $\left| \frac{a_{ij}}{a_{ii}} \right| \leq \frac{1}{2}$. The only integer satisfying this bound is zero. Thus, $a_{ij} = 0$ for all $j > i$, completing the proof.
\end{proof}

\begin{theorem}\label{thm-iso}
 Let $f:(T_A^n,g_0)\longrightarrow \mathbb{R}^N ~(2\leq n\leq 5)$ be  an isometric immersion with $A$ as a Korkine-Zolotarev reduction of the corresponding lattice $\Lambda_A$ formed in \eqref{eq-inv}, then its Willmore energy $\W(f)$ satisfies
   $$\mathcal{W}(f)\geq \left(2\pi\right)^{n}\left(\prod_{i}a_{ii}\right)\left(\sum_i\frac{1}{a_{ii}^2}\right)^{\frac{n}{2}},$$
with equality holding if and only if  $A$ is diagonal and $f$ is congruent to
\begin{equation}\label{eq-f-iso-express}
    \begin{split}
        f(x_1,\ldots,x_n)
=\frac{1}{2\pi}\left(
a_{11}\cos\frac{2\pi x_1}{a_{11}} ,
a_{11}\sin\frac{2\pi x_1}{a_{11}},
\ldots,
a_{nn}\cos\frac{2\pi x_n}{a_{nn}} ,a_{nn}\sin\frac{2\pi x_n}{a_{nn}}
\right).
    \end{split}
\end{equation}
\end{theorem}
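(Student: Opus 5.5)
The plan is to follow Li--Yau's scheme, with Proposition~\ref{lemma-point-estimate} (or Lemma~\ref{le-ly} when $n=2$) as the termwise estimate. By \eqref{eq-W-iso} and the H\"older inequality \eqref{eq-Holder}, $\W(f)=n^n\int|H|^n\,\mathrm dV\ge n^n\operatorname{Vol}(T^n_A)^{1-\frac n2}\bigl(\int|H|^2\,\mathrm dV\bigr)^{n/2}$. Formula \eqref{eq-H} then reduces everything to a lower bound for $\sum_{\xi}|f_\xi|^2|\xi|^4$.

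For each $\xi\in\mathcal A^*(f)$ we apply \eqref{eq-point-estimate}, $|\xi|^4\ge\sum_i \xi_i^2/a_{ii}^2$. Summing against $4\pi^2|f_\xi|^2$ and using the isometry condition \eqref{eq-int-iso} (with $\rho_0=0$), namely $4\pi^2\sum_\xi|f_\xi|^2\xi_i^2=1$ for each $i$, gives
\[
4\pi^2\sum_\xi|f_\xi|^2|\xi|^4\ \ge\ \sum_{i=1}^n\frac{1}{a_{ii}^2}.
\]
Hence $\int|H|^2\,\mathrm dV\ge\frac{4\pi^2}{n^2}\operatorname{Vol}(T_A^n)\sum_i a_{ii}^{-2}$. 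Since $A$ is upper triangular, $\operatorname{Vol}(T_A^n)=\det A=\prod_i a_{ii}$. Substituting,
\[
\W(f)\ge n^n\Bigl(\prod_i a_{ii}\Bigr)^{1-\frac n2}\Bigl(\frac{4\pi^2}{n^2}\prod_i a_{ii}\sum_i a_{ii}^{-2}\Bigr)^{n/2}=(2\pi)^n\Bigl(\prod_i a_{ii}\Bigr)\Bigl(\sum_i a_{ii}^{-2}\Bigr)^{n/2},
\]
which is the stated bound.

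For equality, three conditions must hold. First, H\"older must be an equality, so $|H|$ is constant. Second, \eqref{eq-point-estimate} must be an equality for every $\xi\in\mathcal A^*(f)$. Third, the series manipulations must be exact, which holds because all terms are nonnegative. By the equality clause of Proposition~\ref{lemma-point-estimate}, every $\xi\in\mathcal A^*(f)$ has the form $\pm e_i/a_{ii}$ with $a_{ij}=0$ for $j>i$. Lemma~\ref{lemma-iso condition} says $\mathcal A(f)$ spans $\mathbb R^n$, so every $i$ occurs, and therefore $A$ is diagonal. This gives $\mathcal A^*(f)=\{\pm e_i/a_{ii}\}$, and the isometry condition yields $4\pi^2(|f_{\eta_i}|^2+|f_{-\eta_i}|^2)/a_{ii}^2=1$. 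Because $f$ is real, $f_{-\eta}=\overline{f_\eta}$, so $|f_{\eta_i}|=a_{ii}/(4\pi)$.

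The main obstacle is to show that the Fourier coefficients give round circles in mutually orthogonal planes. Write $f_{\eta_i}=u_i-iv_i$. The isometry condition in full, including the off-diagonal entries and the oscillating terms, requires $\partial_i f\cdot\partial_j f=\delta_{ij}$ pointwise, not only on average. Expanding this pointwise identity, the vanishing of each Fourier mode $e^{2\pi i\langle\pm\eta_i\pm\eta_j,x\rangle}$ should force $\langle f_{\eta_i},f_{\eta_j}\rangle=0$ and $\langle f_{\eta_i},\overline{f_{\eta_j}}\rangle=0$ for $i\ne j$, together with $f_{\eta_i}\cdot f_{\eta_i}=0$ (the complex bilinear product). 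Here one must watch coincidences such as $2\eta_i=\eta_j\pm\eta_k$; these cannot occur because the $\eta_i$ are orthogonal. The resulting relations say that $u_i,v_i$ are orthogonal of equal length $a_{ii}/(2\pi)$, and that the planes $\operatorname{span}\{u_i,v_i\}$ are mutually orthogonal. After an orthogonal change of $\mathbb R^N$ and a translation removing $f_0$, $f$ is exactly \eqref{eq-f-iso-express}. Alternatively, one can invoke the homogeneity argument of Lemma~\ref{lem-Cliff}, via the unimodular condition of \cite{L-W-X2024}. Conversely, a direct computation shows that \eqref{eq-f-iso-express} has constant $|H|$ and attains equality.
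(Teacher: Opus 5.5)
Your proposal is correct and is essentially the paper's proof: H\"older plus \eqref{eq-H}, the termwise bound of Proposition~\ref{lemma-point-estimate} (Lemma~\ref{le-ly} for $n=2$) summed against the diagonal isometry identity $4\pi^2\sum_\xi|f_\xi|^2\xi_i^2=1$, the identity $\operatorname{Vol}(T^n_A)=\prod_i a_{ii}$, and, for equality, the termwise equality clause combined with Lemma~\ref{lemma-iso condition} to force $A$ diagonal. Your use of the full pointwise isometry condition to get round, mutually orthogonal circles spells out a step the paper only asserts, but there are two small slips: the averaged identity gives $|f_{\eta_i}|=a_{ii}/(2\sqrt2\,\pi)$, so $|u_i|=|v_i|=a_{ii}/(4\pi)$ and the circle radius is $2|u_i|=a_{ii}/(2\pi)$; and the suggested alternative via Lemma~\ref{lem-Cliff} does not apply directly when the $a_{ii}$ differ, because that lemma needs all frequencies to lie in a single eigenspace.
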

When $n=2$, this is precisely Proposition~2 of Li-Yau in \cite{Li-Yau}.
  \begin{proof}
After a translation, we can assume that the center of gravity of $T_A^n$ is at the origin of $\R^N$.
By (\ref{eq-H}) and Proposition \ref{lemma-point-estimate} for $3\geq n\geq 5$, and Lemma \ref{le-ly} for $n=2$, we obtain we obtain
   \begin{equation}\label{eq-H-estimate}
\begin{split}
    \int_{T^n_A}|H|^2\mathrm{d}V&\geq \frac{16}{n^2}\pi^4\operatorname{Vol}(T^n_A)\sum_{\xi\in\Lambda_A^*\setminus\{\vec{0}\}}\left(|f_{\xi}|^2\sum_{i}\frac{\xi_i^2}{a^2_{ii}}\right)\\
    &=\frac{16}{n^2}\pi^4\operatorname{Vol}(T^n_A)\sum_{\xi\in\mathcal{A}^*(f)}\left(|f_{\xi}|^2\sum_{i}\frac{\xi_i^2}{a^2_{ii}}\right).
\end{split}        
   \end{equation}
Since $f$ is an isometric immersion, setting $i=j$ in (\ref{eq-int-iso}) yields 
$$4\pi^2\sum_{\xi\in\mathcal{A}^*(f)}|f_\xi|^2\xi_i^2=1.$$
Substituting this identity into (\ref{eq-H-estimate}), we have
\begin{equation}\label{eq-H-estimate2}
     \int_{T^n_A}|H|^2\mathrm{d}V\geq\frac{4}{n^2}\pi^2\operatorname{Vol}(T^n_A)\left(\sum_i\frac{1}{a_{ii}^2}\right).
\end{equation}
Combining (\ref{eq-W-iso}),  (\ref{eq-Holder})  and (\ref{eq-H-estimate2}), we obtain
\begin{equation*}
    \begin{split}
        \W(f)&\geq n^n\left(\operatorname{Vol}(T^n_A)\right)^{1-\frac{n}{2}}\left(\frac{4}{n^2}\pi^2\operatorname{Vol}(T^n_A)\left(\sum_i\frac{1}{a_{ii}^2}\right)\right)^{\frac{n}{2}}\\
        &={\left(4\pi^2\right)^{\frac{n}{2}}}\operatorname{Vol}(T^n_A)\left(\sum_i\frac{1}{a_{ii}^2}\right)^{\frac{n}{2}}\\
        &={\left(4\pi^2\right)^{\frac{n}{2}}}\left(\prod_{i}a_{ii}\right)\left(\sum_i\frac{1}{a_{ii}^2}\right)^{\frac{n}{2}}.
    \end{split}
\end{equation*}

Equality in \eqref{eq-H-estimate}, together with Proposition  \ref{lemma-point-estimate} (and Lemma \ref{le-ly} when $n=2$), implies that every frequency in $\mathcal A^*(f)$ is of the form $\pm a_{ii}^{-1}e_i$, with $a_{ij}=0$ for all $j>i$. Since
$\mathcal A^*(f)$  spans $\R^n$, the matrix $A$ is diagonal, and the isometric condition then yields \eqref{eq-f-iso-express}; the converse follows by direct computation.
\end{proof}
\begin{remark}\rm
When  $3\leq n\leq5$, Theorem \ref{thm-iso}
 provides a finer lower bound  than the one in Theorem \ref{thm-forall-n}. 

\end{remark}
\begin{remark}\label{rk-4.6}\rm
{ 
When $n\ge6$, the following examples show that the algebraic estimate (\ref{eq-point-estimate}) can not hold for Korkine--Zolotarev reduced lattices.
   When $n=6$, the corresponding lattice is generated by   column vectors of 
   \begin{equation*}
A_6=\begin{pmatrix}
1&0&0&0&0&\frac18\\
0&1&-\frac13&-\frac13&-\frac13&-\frac13\\
0&0&\frac{2\sqrt2}{3}&-\frac{\sqrt2}{3}&\frac{7\sqrt2}{24}&-\frac{\sqrt2}{3}\\
0&0&0&\frac{\sqrt6}{3}&-\frac{\sqrt6}{8}&-\frac{\sqrt6}{12}\\
0&0&0&0&\frac{\sqrt{10}}{4}&-\frac{\sqrt{10}}{8}\\
0&0&0&0&0&\frac{\sqrt{30}}{8}
\end{pmatrix}.
   \end{equation*}
The lower-right $5\times5$ block of $A_6$ corresponds to the
Korkine--Zolotarev reduced form $E_{5b}$ in \cite[Table~6.1]{P2007}. Hence,  it is direct to check that $A_6$ is Korkine-Zolotarev reduced. By taking $\xi=(1,0,0,0,0,-1/\sqrt{30})^T\in (A_6^{-1})^{T}\mathbb{Z}^6$, we have
$$|\xi|^4=\frac{961}{900}<\frac{964}{900}=\sum_{i=1}^6\frac{\xi_i^2}{a_{ii}^2}.$$
For $n>6$, set
\[
A_n=I_{n-6}\oplus A_6,\qquad
\xi_n=
\left(
\underbrace{0,\ldots,0}_{n-6},
1,0,0,0,0,-\frac1{\sqrt{30}}
\right)^T.
\]
Then one can check that $A_n$ is Korkine--Zolotarev reduced, $\xi_n\in\Lambda_{A_n}^*$, and the same strict inequality holds. This shows that the  estimate \eqref{eq-point-estimate} can not be extended to all Korkine--Zolotarev reduced lattices in dimensions $n\ge6$. A
different global estimate exploiting relations among the full Fourier support may nevertheless hold; this is precisely the role of the argument in Section 3.}
\end{remark}

\section{On Chen’s total mean curvature under M\"obius deformations}\label{Section-chen-conj}

For the flat $n$-tori, the Willmore functional is a constant multiple of the total mean curvature, and hence Chen’s conjecture in this case is an immediate consequence of Theorem  \ref{thm-forall-n}. Although the Willmore functional is M\"obius invariant, the total mean curvature could decrease under  M\"obius transformations, which provides counterexamples to Chen’s conjecture in the general case when $n\ge3$.

\subsection{M\"{o}bius transformation formula of the total mean curvature for submanifolds in $\SB^N\subset\R^{N+1}$ }~

For a submanifold ${M^n}$ of $\mathbb{R}^{N+1}$ which is contained in the unit sphere $\SB^{N}$, the Euclidean total
mean curvature can be written as
$$
\int_{M^n}|H
|^n\mathrm{d}V
=\int_{M^n}\left(1+| H^{\SB^N}|^2\right)^{n/2}\mathrm{d}V .
$$
Here $H$ denotes the mean curvature vector of $M^n$ in $\R^{N+1}$ and $H^{\SB^N}$ denotes the mean curvature vector of $M^n$ in $\SB^{N}$. It is also called $\mathcal H$-functional in \cite{Chen2015, L-2026}, where  minimal submanifolds in $\SB^N$ are shown to be critical submanifolds of this functional. Moreover, L\"{u} computed the second variation of this functional for minimal submanifolds in $\SB^N$, which 
can be expressed by { 
\[
\mathcal H''(0)
=
\frac1n
\int_{M^n}
\langle \mathcal{L}(\mathcal{L}-n)X,X\rangle\,\mathrm d V ,
\]
where $\mathcal L$ denotes the Jacobi operator of the area functional and $X$ is the variational vector field.
See \cite{L-2026} and reference therein for more details.} 
In particular, the absence of eigenvalues of $\mathcal L$ of $M$ in the interval $(-n,0)$ implies the $\mathcal H$-stability of $M^n$.
From the spectral computation carried out in \cite{W-W-2023}, it follows that every Clifford minimal product is stable for the $\mathcal{H}$-functional. Moreover, by Simons' result \cite{Simons}, the vector fields of M\"obius transformations are Jacobi fields of the $\mathcal{H}$-functional.
Since the  total mean curvature 
is not M\"{o}bius invariant when $n\geq3$, it is natural to examine   higher order variations of this functional along M\"obius transformations of $\SB^{N}$. 
It turns out that there exists a family of M\"obius transformations  of $\SB^{N}$ that strictly decreases the total mean curvature. 
This disproves Chen's conjecture.

Let $v=(v_1, v_2, \cdots, v_{N+1})\in\SB^{N}\subset\mathbb R^{N+1}$ be a unit vector and set $a=\varepsilon v$ with $|\varepsilon|<1$.
Consider the M\"obius transformation of $\SB^{N}$ defined by
$$
\Phi_a(y)
=
\frac{(1-|a|^2)(y+a)}{|y+a|^2}
+a,
\qquad y\in \SB^{N},
$$
whose conformal factor is $e^{2\rho_a}(y)=\left(\frac{1-|a|^2}{|y+a|^2}\right)^2$. 
\begin{theorem}\label{prop-H}
Let $f: M^n \rightarrow \SB^N$ be a closed minimal submanifold of dimension $n$. Regarding $f_\varepsilon=\Phi_{\varepsilon v}\circ f$ as a submanifold in $\mathbb{R}^{N+1}$, the total mean curvature satisfies 
\begin{equation}\label{eq-mini-sub-H}
\begin{split}
 \int_{f_\varepsilon({M^n})}
|H_\varepsilon|^n\,\mathrm dV
=&\mathcal{V}(f_0)+ \frac{n(n^2-4)}{12} t^3 \int_{M^n} \langle f,v\rangle^3 \, \mathrm{d}V\\
&- \frac{n(n-2)}{4} t^4 \int_{M^n} \langle f,v\rangle \langle B(v^\top, v^\top), v^\bot \rangle \, \mathrm{d}V
+ O(t^5), ~~~t\ll 1,
\end{split}
\end{equation}
where $H_\varepsilon$ denotes the mean curvature vector of $f_{\varepsilon}$ in $\mathbb{R}^{N+1}$, $t=\frac{2\varepsilon}{1+\varepsilon^2}$ and $B$ is the second fundamental form of ${M^n}$ in $\mathbb{S}^N$. 
\end{theorem}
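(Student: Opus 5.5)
Write $u=\langle f,v\rangle$ and $\sigma=|f+a|^2=1+2\varepsilon u+\varepsilon^2$. The plan is to express the integrand for $f_\varepsilon$ explicitly in terms of $u$ and the geometry of $f$, and then expand in $\varepsilon$.

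\textbf{Step 1 (exact integrand).} Since $f_\varepsilon$ lies in $\SB^N$, we have
\[
|H_\varepsilon|^n=\bigl(1+|H^{\SB^N}_\varepsilon|^2\bigr)^{n/2}, \qquad \mathrm dV_\varepsilon=e^{n\rho_a}\,\mathrm dV,\qquad e^{\rho_a}=\frac{1-\varepsilon^2}{\sigma}.
\]
Under a conformal change of the ambient metric $\hat g=e^{2\rho}g$, the mean curvature transforms by $\hat H=e^{-2\rho}\bigl(H-(\nabla\rho)^\perp\bigr)$, with squared norm $|\hat H|^2_{\hat g}=e^{-2\rho}\bigl|H-(\nabla\rho)^\perp\bigr|^2$. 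For $f$ minimal ($H^{\SB^N}=0$) this gives
\[
|H^{\SB^N}_\varepsilon|^2=e^{-2\rho_a}\,\bigl|(\nabla^{\SB^N}\rho_a)^\perp\bigr|^2 .
\]
Compute $\rho_a=\log(1-\varepsilon^2)-\log\sigma$. The ambient gradient on the sphere is the tangential projection of $-2\varepsilon v/\sigma$, and its normal part along $M$ is $-2\varepsilon v^\perp/\sigma$. Hence
\[
|H^{\SB^N}_\varepsilon|^2=\frac{4\varepsilon^2|v^\perp|^2}{(1-\varepsilon^2)^2},
\]
and the total mean curvature becomes
\[
\int_M\Bigl(\frac{(1-\varepsilon^2)^2}{\sigma^2}+\frac{4\varepsilon^2|v^\perp|^2}{\sigma^2}\Bigr)^{n/2}\mathrm dV .
\]
Here $|v^\perp|^2=1-u^2-|v^\top|^2$, where $v^\perp$ is the normal part in $\SB^N$. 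Dividing numerator and denominator by $(1+\varepsilon^2)^2$ turns everything into a function of $t=2\varepsilon/(1+\varepsilon^2)$, using $(1-\varepsilon^2)^2/(1+\varepsilon^2)^2=1-t^2$:
\[
\int_M\frac{\bigl(1-t^2+t^2|v^\perp|^2\bigr)^{n/2}}{(1+tu)^n}\,\mathrm dV=\int_M\frac{\bigl(1-t^2u^2-t^2|v^\top|^2\bigr)^{n/2}}{(1+tu)^n}\,\mathrm dV .
\]
This exact formula is the backbone of the argument.

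\textbf{Step 2 (Taylor expansion).} Expand the integrand to order $t^4$ as a polynomial in $u$ and $w:=|v^\top|^2$. The odd-order terms involve odd powers of $u$, possibly multiplied by $w$.

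\textbf{Step 3 (integral identities).} Simplify the coefficients using identities for minimal $f$:
\begin{itemize}
\item $\Delta u=-nu$, so $\int u=0$.
\item $\tfrac12\Delta u^2=|\nabla u|^2-nu^2$ with $|\nabla u|^2=w$, giving $\int w=n\int u^2$.
\item $\Delta u^3$ and $\Delta(u^2)$ give relations such as $\int uw=\tfrac{n+1}{3}\cdot$(multiple of) $\int u^3$, which absorb the $t^3$ coefficient into a single $\int u^3$.
\end{itemize}
At order $t^2$, the coefficient should combine to $-\tfrac n2\int(u^2+w)+\tfrac{n(n+1)}2\int u^2$, which vanishes by $\int w=n\int u^2$. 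This is consistent with Möbius fields being Jacobi fields.

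\textbf{Step 4 (the $t^4$ term).} Here quantities like $\int u^4$, $\int u^2w$ and $\int w^2$ appear. Reduce them using $\Delta u^4$, and via Bochner/Hessian identities: $\nabla^2u=-u\,g+\langle B(\cdot,\cdot),v^\perp\rangle$, so $\nabla w=2\nabla^2u(v^\top,\cdot)$. Integrating $\langle\nabla w,\nabla u\rangle u$ by parts should reduce everything to $\int u\langle B(v^\top,v^\top),v^\perp\rangle$ plus multiples of $\int u^4$ and $\int u^2w$. These last two must cancel against each other.

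I expect Step 4 to be the main obstacle. The bookkeeping of which integration-by-parts relations make the $u^4$ and $u^2w$ terms cancel exactly, leaving the coefficient $-\tfrac{n(n-2)}4$, is delicate. The error term $O(t^5)$ is uniform because $M$ is closed and the integrand is smooth in $t$ near $0$.
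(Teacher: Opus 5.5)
Your plan is correct and is essentially the paper's proof. The paper uses the same conformal-change computation, giving $|H_\varepsilon|^2=1+4\varepsilon^2|v^\perp|^2/(1-\varepsilon^2)^2$ and the integrand $(1-t^2(u^2+w))^{n/2}(1+tu)^{-n}$, followed by Taylor expansion and the appendix integral identities (including the divergence of $u|\nabla u|^2\nabla u$ with the Hessian formula for $\int w^2$). One small correction: the $t^3$ identity from $\Delta u^3$ is $\int uw=\frac n2\int u^3$; in Step 4, $\int u^2w=\frac n3\int u^4$ and $\int w^2=\frac{n(n+2)}{3}\int u^4-2\int u\langle B(v^\top,v^\top),v^\perp\rangle$ make the $\int u^4$ terms cancel exactly, leaving the coefficient $-\frac{n(n-2)}{4}$.
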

\begin{proof}
Let $H_\varepsilon^{\SB^N}$ denote the mean curvature vector of $f_\varepsilon$ in $\SB^N$. Since $\Phi_a:(\SB^N, g_0)\to(\SB^N,g_0)$ is a conformal transformation with the conformal factor $e^{2\rho_a}$, 
applying the standard formula yields
\[
H_{\varepsilon}^{\SB^N}=
-e^{-2\rho_a}d\Phi_a\left((\operatorname{grad}_{\mathbb S^{N}}\rho_a)^\perp\right)=\frac{2e^{-2\rho_a}}{|f+a|^2}d\Phi_a\left(\left(a-\langle a,f\rangle f\right)^\perp
\right)=\frac{2e^{-\rho_a}}{1-|a|^2}d\Phi_a(a^\perp
),\]
where $\perp$ denotes orthogonal projection onto the normal bundle of $f$. 
Therefore, we have 
\begin{equation*}
 |H_\varepsilon^{\mathbb S^{N}}|=\frac{2|a^\perp|}{1-|a|^2},\qquad  |H_\varepsilon|^2=1+\frac{4|a^\perp|^2}{(1-|a|^2)^2}=1+\frac{4\varepsilon^2|v^\perp|^2}{(1-\varepsilon^2)^2}.  
\end{equation*}
It follows that
\begin{equation}\label{eq-tay}
    \begin{split}
        \int_{f_\varepsilon({M^n})}
|H_\varepsilon|^n\,\mathrm dV
&=
\int_{M^n}
\left(1+\frac{4\varepsilon^2|v^\perp|^2}{(1-\varepsilon^2)^2}\right)^{\frac n2}
\left(\frac{1-\varepsilon^2}{1+\varepsilon^2+2\varepsilon\langle f,v\rangle}\right)^n
\,\mathrm dV
\\
&=
\int_{M^n}
\frac{\left((1-\varepsilon^2)^2+4\varepsilon^2|v^\perp|^2\right)^{\frac n2}}
{\left(1+\varepsilon^2+2\varepsilon\langle f,v\rangle\right)^n}
\,\mathrm dV\\
&=\int_{M^n}
\frac{\left((1+\varepsilon^2)^2-4\varepsilon^2(|v^\top|^2+\langle f,v\rangle^2)\right)^{\frac n2}}
{\left(1+\varepsilon^2+2\varepsilon\langle f,v\rangle\right)^n}
\,\mathrm dV\\
&=\int_{M^n}
\left(1-\frac{4\varepsilon^2}{(1+\varepsilon^2)^2}\left(|v^\top|^2+\langle f,v\rangle^2\right)\right)^{\frac n2}
{\left(1+\frac{2\varepsilon}{1+\varepsilon^2}\langle f,v\rangle\right)^{-n}}
\,\mathrm dV.
    \end{split}
\end{equation}
Set 
$t=\frac{2\varepsilon}{1+\varepsilon^2}$.  Applying the Taylor expansion to \eqref{eq-tay} and using  Lemma~\ref{eq-minimal}, we get (\ref{eq-mini-sub-H}).

\end{proof}

\subsection{Further discussions on the variational formula}~

Applying Theorem \ref{prop-H} to the  Clifford $n$-torus $\CC$, we obtain the following corollary. 

\begin{corollary}\label{cor-T}
 For the Clifford $n$-torus  $f: T^n_{V_0}\rightarrow \mathbb{S}^{2n-1}$ with $V_0=\frac{2\pi}{\sqrt{n}}I_n$,  
 along the variation $\Phi_{\varepsilon v}$, there holds 
    \begin{equation}\label{eq-H-t}
\begin{split}
   \int_{f_{\varepsilon}(T_{V_0}^n)}
| H_\varepsilon|^n\,\mathrm dV_\varepsilon 
=\mathcal{V}(f_0)
\left(
1+
\frac{n-2}{32}
\bigl((n+1)S_v-2\bigr)t^4
+
O(t^5)
\right),~~~t\ll 1,  
\end{split}
\end{equation}
where $t=\frac{2\varepsilon}{1+\varepsilon^2}$ and $S_v=\sum_{k=1}^{n}\left(v^2_{2k-1}+v^2_{2k}\right)^2$.
\end{corollary}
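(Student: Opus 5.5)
The plan is to specialize Theorem~\ref{prop-H} to the Clifford torus: since $\mathcal{T}^n$ is minimal in $\mathbb{S}^{2n-1}$, the expansion \eqref{eq-mini-sub-H} holds. It then remains to evaluate the two integrals $\int\langle f,v\rangle^3\,\mathrm dV$ and $\int\langle f,v\rangle\langle B(v^\top,v^\top),v^\perp\rangle\,\mathrm dV$ explicitly. We parametrize by angles $\theta_k=\sqrt n\,x_k$, which are independent and uniform on $[0,2\pi)$ over the fundamental domain of $V_0$. Then
\[
f=\tfrac{1}{\sqrt n}\textstyle\sum_k u_k,\qquad u_k=\cos\theta_k E_{2k-1}+\sin\theta_k E_{2k}.
\]
The orthonormal tangent frame is $e_k=-\sin\theta_k E_{2k-1}+\cos\theta_k E_{2k}$. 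Each integral becomes $\mathcal V(f_0)$ times an average over the $n$-torus of angles.

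For the cubic term, the translation $\theta_k\mapsto\theta_k+\pi$ for all $k$ is an isometry of $T^n_{V_0}$ sending $f\mapsto -f$. Hence $\int\langle f,v\rangle^3\,\mathrm dV=0$, and the $t^3$ term drops out. For the quartic term, we compute the second fundamental form in the sphere. We have $D_{e_k}e_k=-\sqrt n\,u_k$, and removing the component along $f$ (which equals $-1$) gives $B(e_k,e_k)=-\sqrt n\,u_k+f$; also $B(e_k,e_l)=0$ for $k\ne l$. We write the $k$-th plane component of $v$ as $r_k(\cos\varphi_k,\sin\varphi_k)$, so that $\sum r_k^2=1$ and $S_v=\sum r_k^4$. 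Setting $c_k=\cos(\theta_k-\varphi_k)$ and $s_k=\sin(\theta_k-\varphi_k)$, we get
\[
\langle f,v\rangle=\tfrac1{\sqrt n}\textstyle\sum_k r_kc_k,\qquad \langle v,e_k\rangle=-r_ks_k,\qquad \langle u_k,v\rangle=r_kc_k.
\]
Since $B$ is normal, we may pair it with $v$ instead of $v^\perp$:
\[
\langle B(v^\top,v^\top),v^\perp\rangle=\sum_k r_k^2s_k^2\bigl(-\sqrt n\,r_kc_k+\langle f,v\rangle\bigr).
\]

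Averaging, with the moments $\mathbb E[c^2s^2]=1/8$, $\mathbb E[c^2]=\mathbb E[s^2]=1/2$, and vanishing odd moments, the contribution of $-\sqrt n\,r_kc_k$ is $-\sum_k r_k^4/8=-S_v/8$. The contribution of $\langle f,v\rangle^2$ is
\[
\tfrac1n\Bigl(\tfrac14\textstyle\sum_{j\ne k}r_j^2r_k^2+\tfrac18 S_v\Bigr)=\tfrac1n\cdot\tfrac{2-S_v}{8},
\]
using $\sum_{j\ne k}r_j^2r_k^2=1-S_v$. The total average is therefore $\frac{2-(n+1)S_v}{8n}$. Multiplying by $-\frac{n(n-2)}{4}$ gives $\frac{n-2}{32}\bigl((n+1)S_v-2\bigr)$, which is the claimed $t^4$ coefficient in \eqref{eq-H-t}.

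The main obstacle is getting the second fundamental form and its sign conventions right, in particular the $+f$ correction when passing from $\mathbb R^{2n}$ to $\mathbb{S}^{2n-1}$. The other delicate point is handling the mixed moments, where only the diagonal terms $j=k$ survive in the first sum. The final agreement with the stated constant serves as a consistency check on these conventions.
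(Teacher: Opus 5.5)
Your proposal is correct and follows essentially the same route as the paper. Both arguments specialize Theorem~\ref{prop-H} to the Clifford torus, kill the cubic term by the antipodal translation $f\mapsto -f$, use $B(e_k,e_k)=f-\sqrt n\,u_k$, and evaluate the quartic integral via the moments $\tfrac18$, $\tfrac14$ and the identity $\sum_{j\ne k}r_j^2r_k^2=1-S_v$. The only cosmetic difference is that you pair $B(v^\top,v^\top)$ directly with $v$ instead of first writing out $v^\perp$, which slightly shortens the bookkeeping.
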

\begin{proof}
Set
$$
\alpha_i
=
\cos\sqrt n x_i\,E_{2i-1}
+
\sin\sqrt n x_i\,E_{2i},
\qquad
\text{and}\qquad
\beta_i
=
-\sin\sqrt n x_i\,E_{2i-1}
+
\cos\sqrt n x_i\,E_{2i}.
$$
Then
$$
f=\frac{1}{\sqrt n}\sum_{i=1}^n\alpha_i,
\qquad
\frac{\partial f}{\partial x_i}=\beta_i,\qquad 1\leq i\leq n.
$$
Thus, $\{\beta_i\}$ forms an orthonormal tangent frame of the Clifford $n$-torus. A direct computation gives
$$
B(\beta_i,\beta_i)=f-\sqrt n\,\alpha_i,
\qquad \text{and}
\qquad B(\beta_i,\beta_j)=0,
\qquad i\ne j.
$$

For any $v\in\mathbb R^{2n}$, using $\{\alpha_i,\beta_i\}$, we may write
$$
v
=
\sum_{i=1}^n
\left(
\langle\alpha_i,v\rangle\alpha_i
+
\langle\beta_i,v\rangle\beta_i
\right).
$$
Then 
$$
\begin{aligned}
\langle f,v\rangle 
=
\frac{1}{\sqrt n}\sum_{i=1}^n\langle\alpha_i,v\rangle, \quad
v^{\top}=\sum_{i=1}^n\langle\beta_i,v\rangle\beta_i, \quad v^\perp
=v-\langle f,v\rangle f-v^{\top} =
\frac{1}{\sqrt n}\sum_{i=1}^n\langle\alpha_i,v\rangle (\sqrt n\alpha_i-f),
\end{aligned}
$$
and,
$$
\begin{aligned}
B(v^{\top},v^{\top})
=
\sum_{i,j}
\langle\beta_i,v\rangle
\langle\beta_j,v\rangle
B(\beta_i,\beta_j)=
\sum_{i=1}^n
\langle\beta_i,v\rangle^2
\left(f-\sqrt n\,\alpha_i\right).
\end{aligned}
$$
It follows that
$$
\left\langle B(v^{\top},v^{\top}),v^\perp\right\rangle=
-\frac{n-1}{\sqrt n}\sum_{i=1}^n\langle\alpha_i,v\rangle \langle\beta_i,v\rangle^2+\frac{1}{\sqrt n}\sum_{i\neq j}^n\langle\alpha_i,v\rangle \langle\beta_j,v\rangle^2,$$
and then 
$$
\begin{aligned}
\langle f,v\rangle\left\langle B(v^{\top},v^{\top}),v^\perp\right\rangle=&-\frac{n-1}{ n}\sum_{i=1}^n\langle\alpha_i,v\rangle^2 \langle\beta_i,v\rangle^2-\frac{n-2}{n}\sum_{i\neq j}^n\langle\alpha_j,v\rangle \langle\alpha_i,v\rangle \langle\beta_i,v\rangle^2\\
&+\frac{1}{n}\sum_{i\neq j}^n\langle\alpha_i,v\rangle^2 \langle\beta_j,v\rangle^2+\frac{1}{n}\sum_{i\neq j\neq k}^n\langle\alpha_i,v\rangle \langle\alpha_k,v\rangle \langle\beta_j,v\rangle^2
\end{aligned}.$$
By the symmetry, we derive that $\int_{T_{V_0}^n}\langle f,v\rangle ^3\,\mathrm dV=0$,  and 
$$\int_{T_{V_0}^n}\langle f,v\rangle \left\langle B(v^{\top},v^{\top}),v^\perp\right\rangle\,\mathrm dV=-\frac{n-1}{ n}\sum_{i=1}^n\int_{T_{V_0}^n} \langle\alpha_i,v\rangle^2 \langle\beta_i,v\rangle^2\,\mathrm dV+\frac{1}{n}\sum_{i\neq j}^n\int_{T_{V_0}^n}\langle\alpha_i,v\rangle^2 \langle\beta_j,v\rangle^2\,\mathrm dV.$$
Note that 
\begin{equation*}
    \begin{split}
&\langle\alpha_i,v\rangle^2
=
\left(
\cos\sqrt n x_i\,v_{2i-1}
+
\sin\sqrt n x_i\,v_{2i}
\right)^2 
=
\left(v_{2i-1}^2+v_{2i}^2\right)
\cos^2\left(\sqrt n x_i-\phi_i\right),\\
&\langle\beta_i,v\rangle^2
=
\left(
-\sin\sqrt n x_i\,v_{2i-1}
+
\cos\sqrt n x_i\,v_{2i}
\right)^2 
=
\left(v_{2i-1}^2+v_{2i}^2\right)
\sin^2\left(\sqrt n x_i-\phi_i\right),
    \end{split}
\end{equation*}
where $\phi_i\in[0,2\pi)$ is some constant. Then the conclusion follows from 
\[
\int_{T^n_{V_0}} \langle\alpha_i, v\rangle^2 \, \langle\beta_j, v\rangle^2 \,\mathrm dV
=
\begin{cases}
\frac{\mathcal{V}(f_0)}{8}\bigl(v_{2i-1}^2+v_{2i}^2\bigr)^2, &i = j, \\[15pt]
\frac{\mathcal{V}(f_0)}{4}\bigl(v_{2i-1}^2+v_{2i}^2\bigr)
\bigl(v_{2j-1}^2+v_{2j}^2\bigr), 
&i \neq j,
\end{cases}
\]
and 
$$1-
\sum_{i\ne j}
\left(v_{2i-1}^2+v_{2i}^2\right)
\left(v_{2j-1}^2+v_{2j}^2\right)=S_v.$$

\end{proof}

The following corollary shows Chen's conjecture fails for $n\ge 3$. 
\begin{corollary}
  When $n \geq 3$, there exist M\"{o}bius transformations of $\SB^{2n-1}$ that strictly decrease the total mean curvature of the Clifford $n$-torus in $\R^{2n}$.
\end{corollary}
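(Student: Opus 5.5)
The plan is to apply Corollary~\ref{cor-T} directly and choose the unit vector $v\in\SB^{2n-1}$ so that the coefficient of $t^4$ in \eqref{eq-H-t} is negative. The $t^3$ term already vanishes for the Clifford torus. So the sign of $\int|H_\varepsilon|^n\,\mathrm dV_\varepsilon-\mathcal V(f_0)$ for small $t\neq0$ is governed by
\[
\frac{n-2}{32}\bigl((n+1)S_v-2\bigr)t^4 ,
\]
provided this coefficient is nonzero. Since $n\ge3$, the factor $n-2$ is positive. It therefore suffices to find a unit $v$ with $S_v<\frac{2}{n+1}$.

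Write $p_k=v_{2k-1}^2+v_{2k}^2\ge0$, so that $\sum_k p_k=|v|^2=1$ and $S_v=\sum_k p_k^2$. By Cauchy--Schwarz, $S_v\ge 1/n$, with equality exactly when all $p_k=1/n$. Choose, for instance, $v=\frac{1}{\sqrt n}(1,0,1,0,\ldots,1,0)$. Then $S_v=1/n$ and
\[
(n+1)S_v-2=\frac{n+1}{n}-2=\frac{1-n}{n}<0 .
\]
Hence the $t^4$ coefficient equals $-\frac{(n-2)(n-1)}{32n}<0$.

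With this $v$, \eqref{eq-H-t} gives
\[
\int|H_\varepsilon|^n\,\mathrm dV_\varepsilon=\mathcal V(f_0)\Bigl(1-\tfrac{(n-1)(n-2)}{32n}t^4+O(t^5)\Bigr).
\]
For all sufficiently small $\varepsilon\neq0$, which makes $t=\frac{2\varepsilon}{1+\varepsilon^2}$ small and nonzero, the $t^4$ term dominates the remainder. The total mean curvature is then strictly less than $\mathcal V(f_0)=\int_{\CC}|H|^n\,\mathrm dV$, which is $(4\pi^2/n)^{n/2}$ after the normalization of $\CC$ in \eqref{eq-CT}. Since $\Phi_{\varepsilon v}$ is a Möbius transformation of $\SB^{2n-1}$, this gives the claim and contradicts \eqref{eq-chen}.

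The only step requiring care is whether the $O(t^5)$ remainder in Corollary~\ref{cor-T} is uniform. For fixed $v$ the integrand in \eqref{eq-tay} is a real-analytic function of $t$ near $0$, uniformly in the point of the compact torus. The remainder is therefore genuinely $O(t^5)$, and no further estimate is needed. Scale invariance of $\int|H|^n\,\mathrm dV$ lets us work with the normalization $V_0$ used there.
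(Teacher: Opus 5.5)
Your proof is correct and is essentially the paper's argument. Like the paper, you substitute $v^{\sharp}=\frac{1}{\sqrt n}(1,0,\ldots,1,0)$, with $S_{v^{\sharp}}=1/n$, into Corollary~\ref{cor-T} to get the negative $t^4$ coefficient $-\frac{(n-1)(n-2)}{32n}$, and conclude for small $\varepsilon\neq 0$. Your extra remarks (the bound $S_v\ge 1/n$ and the uniformity of the $O(t^5)$ remainder via analyticity in $t$) are correct additions that the paper leaves implicit.
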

\begin{proof}
{ 
By (\ref{eq-H-t}), the sign of the fourth-order coefficient is determined by $(n+1)S_v-2$. More precisely, the functional $\int_{f_{\varepsilon}(T_{V_0}^n)}
| H_\varepsilon|^n\,dV_\varepsilon $ decreases to the fourth order if
$S_v<\frac{2}{n+1}$ and increases to fourth order if $S_v>\frac{2}{n+1}$. When $S_v=\frac{2}{n+1}$, the fourth-order coefficient vanishes, and the expansion in (5.3) alone does not determine the behavior of the functional.

To exhibit a decreasing direction,} we consider the following variation of the Clifford $n$-torus in $\SB^{2n-1}$, induced by M\"obius transformations. Set
\begin{equation}
    \label{eq-vsharp}
v^{\sharp}:=\frac{1}{\sqrt{n}}(1,0,1,0,\ldots,1,0).
\end{equation}
Then $S_{v^{\sharp}} = \sum_{k=1}^{n} \bigl( v_{2k-1}^2 + v_{2k}^2 \bigr)^2 = \frac{1}{n}.$
It follows that
\[
\frac{n-2}{32} \bigl( (n+1) S_{v^{\sharp}} - 2 \bigr)
= \frac{n-2}{32} \left( \frac{n+1}{n} - 2 \right)
= \frac{(n-2)(1-n)}{32n}.
\]
By Corollary~\ref{cor-T}, when $n \geq 3$, for sufficiently small nonzero $\varepsilon$, the total mean curvature decreases along the variation $\Phi_{\varepsilon v^{\sharp}}$. This implies that the Clifford $n$-torus in $\SB^{2n-1}$ is not a local minimizer. Hence $\bigl(\frac{4\pi^2}{n}\bigr)^{n/2}$ is not the minimum of the total mean curvature among all immersions of $n$-tori in $\R^N$ when $N\geq 2n$.
\end{proof}
{ \begin{remark}\rm
A natural question is the infimum of the total mean curvature over all M\"{o}bius images of the Clifford $n$-torus $\CC$ in $\mathbb S^{2n-1}$. A full treatment of this problem lies beyond the scope of this paper; here we only give some observations.
\begin{itemize}
\item[(1)]By exploiting the $T^n$-invariance of $\CC$, we may reduce the M\"{o}bius parameter $a\in\mathbb B^{2n}\subset\mathbb R^{2n}$ to the form
\[
a=(r_1,0,r_2,0,\dots,r_n,0),
\]
so that the M\"{o}bius images of $\CC$ form an $n$-parameter family of $n$-tori.
\item[(2)] It is easy to verify that the unit vector $v^{\sharp}$ in \eqref{eq-vsharp} minimizes  $S_v$ among all $v\in\SB^{2n-1}$. By \eqref{eq-H-t},  this is also the direction  along which the total mean curvature decreases most rapidly to fourth order. 

\item[(3)] Consider the $1$-parameter family of $n$-tori 
defined by $\Phi_{\varepsilon v^{\sharp}}\circ f$. The monotonicity of this family can be analyzed as follows, and the limit 
suggests a possible non-compactness phenomenon: the infimum of the total mean curvature among all $n$-tori may not be attained by an immersion.
\end{itemize}
\end{remark}}
\begin{proposition}\label{lemma-F4}
Let $f:T_{V_0}^n\rightarrow\mathbb{R}^{2n}$ be the Clifford torus. Consider the M\"obius deformation along the direction
$v^{\sharp}$. The total mean curvature of  $\Phi_{\varepsilon v^{\sharp}}\circ f$ , denoted by  $F_n(t)$ with $t=\frac{2\varepsilon}{1+\varepsilon^2}$, is of the form
\begin{equation}
    \label{eq-Ft}F_n(t)=\int_{T_{V_0}^n}
\left(1-t^2\left(|(v^\sharp)^\top|^2+\langle f,v^\sharp\rangle^2\right)\right)^{\frac n2}
{\left(1+t\langle f,v^\sharp\rangle\right)^{-n}}
\,\mathrm dV.
\end{equation}
In particular, when $n=4$, $F_4(t)$ is strictly decreasing for $t\in[0,1)$.
    \end{proposition}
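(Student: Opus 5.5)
Formula \eqref{eq-Ft} is just the last line of \eqref{eq-tay} in the proof of Theorem~\ref{prop-H}, specialized to $M^n=T^n_{V_0}$ and $v=v^\sharp$, with $t=\frac{2\varepsilon}{1+\varepsilon^2}$. So the first part only needs us to observe that no Taylor expansion is involved up to that step. The identity \eqref{eq-tay} is exact for $|\varepsilon|<1$, and $\varepsilon\mapsto t$ is a monotone bijection of $[0,1)$ onto itself. Hence monotonicity in $t$ and in $\varepsilon$ are equivalent.

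For $n=4$ I would make the integrand explicit using the frame $\alpha_i,\beta_i$ from the proof of Corollary~\ref{cor-T}. Set $\theta_i=\sqrt n\,x_i$. With $v^\sharp=\frac{1}{\sqrt n}\sum_i E_{2i-1}$ we get
$$\langle f,v^\sharp\rangle=\tfrac1n\sum_i\cos\theta_i=:u, \qquad |(v^\sharp)^\top|^2=\tfrac1n\sum_i\sin^2\theta_i .$$
For $n=4$ this gives $Q:=|(v^\sharp)^\top|^2+u^2=1-\frac14\sum_i\cos^2\theta_i+u^2$. The exponent $n/2=2$ is an integer, so
$$F_4(t)=\frac{\mathcal V(f_0)}{(2\pi)^4}\int_{[0,2\pi]^4}\frac{(1-t^2Q)^2}{(1+tu)^4}\,d\theta .$$
The substitution $\theta\mapsto\theta+\pi$ (in every coordinate) sends $u\mapsto -u$ and fixes $Q$. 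So we can symmetrize the integrand to $\frac12(1-t^2Q)^2\bigl[(1+tu)^{-4}+(1-tu)^{-4}\bigr]$. This shows $F_4$ is even in $t$, consistent with Corollary~\ref{cor-T} having no $t^3$ term.

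The main step is to show $F_4'(t)<0$ on $(0,1)$. My first attempt would be a power series $F_4(t)=\sum_k c_k t^{2k}$, expanding $(1\pm tu)^{-4}$ binomially; this is legitimate since $|u|\le1$, and the resulting series converges for $|t|<1$ except possibly near the set $u=\pm1$. Each coefficient is then a finite combination of trigonometric moments $\int u^{2j}\bigl(\sum\cos^2\theta_i\bigr)^m d\theta$. These moments are computable by multinomial expansion, because only even powers of each $\cos\theta_i$ survive and $\frac1{2\pi}\int_0^{2\pi}\cos^{2k}=\binom{2k}{k}4^{-k}$.

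I would then aim to prove $c_1=0$ and $c_k<0$ for all $k\ge2$. For $c_1$, Corollary~\ref{cor-T} already gives vanishing second order; for $c_2$ it gives the value $\frac{(n-2)(1-n)}{32n}=-\frac{3}{64}$ after normalization, which also serves as a consistency check. The expected obstacle is controlling the sign of $c_k$ uniformly in $k$. If a closed form for the general coefficient is not manageable, the fallback is different. One writes $F_4'(t)$ as a single integral, reduces it (after integrating out the angular variables) to an integral against the distribution of $u=\frac14\sum\cos\theta_i$ conditioned on $\sum\cos^2\theta_i$, and shows the resulting one-variable integrand is negative. Alternatively one can check $F_4'(t)<0$ directly by bounding the integrand pointwise after pairing $u$ with $-u$. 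Near $t\to1$ the integrand blows up where $u=-1$; there I would verify that $(1-t^2Q)^2$ vanishes to matching order at those points, so the derivative stays negative up to $t=1$.
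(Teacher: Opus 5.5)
Your derivation of \eqref{eq-Ft} from the last line of \eqref{eq-tay} is correct, and so are your formulas for $u$ and $Q$, the evenness of $F_4$, and the check $c_2=-\frac{3}{64}\mathcal V(f_0)$. The gap is the actual claim: you never prove $F_4'(t)<0$. The uniform sign of the $c_k$, which you flag as the obstacle, is left open. As stated, neither fallback can work, because the decrease of $F_4$ comes from a global cancellation, not a pointwise one.

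Already at order $t^2$ your symmetrized integrand is $1+t^2(10u^2-2Q)+O(t^4)$, and $10u^2-2Q=8$ at $\theta=0$. Its integral vanishes only because $\int|(v^\sharp)^\top|^2\,\mathrm dV=4\int u^2\,\mathrm dV$, which is an integration by parts. Averaging over level sets $\{\sum_i\cos^2\theta_i=r\}$ does not help. By the symmetry $\theta_i\mapsto\pi-\theta_i$, the average of $u^2$ on such a level set is $r/16$, so the averaged $t^2$-coefficient is $r-2$, which is positive for $r>2$. Conditioning on $u$ instead fails near $u=\pm1$. So for small $t>0$ any such reduced integrand of $F_4'$ is positive on part of its range.

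Your concern at $t\to1$ is irrelevant for $t\in[0,1)$, and the matching-order claim is false. At $\theta=(\pi,\dots,\pi)$ one has $Q=1$ and $u=-1$, so the integrand is $(1-t^2)^2/(1-t)^4\to\infty$; this is the spherical bubble analysed in the paper's subsequent proposition.

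The missing ingredient is a pair of integrations by parts. Set $s=\langle f,v^\sharp\rangle$ and $P=|\nabla s|^2+s^2$. In $F_n'$, write $s+tP=s(1+ts)+t|\nabla s|^2$ and use $\Delta s=-ns$ on the first piece. The $|\nabla s|^2$ terms cancel, leaving
\[
F_n'(t)=\Bigl(\frac n2-1\Bigr)t^2\int_{T^n_{V_0}}(1-t^2P)^{\frac n2-2}(1+ts)^{-n}\langle\nabla P,\nabla s\rangle\,\mathrm dV,\qquad
\langle\nabla P,\nabla s\rangle=\frac{2}{n^2}\sum_{j<k}(\gamma_j-\gamma_k)^2(\gamma_j+\gamma_k),
\]
with $\gamma_k=\cos\sqrt n\,x_k$. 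Next integrate by parts on each circle factor, using $\int\gamma_iG\,\mathrm dV=\int(1-\gamma_i^2)\,\partial_{\gamma_i}G\,\mathrm dV$. Differentiating $(\gamma_j-\gamma_k)^2$ returns $-2$ times the original integral. Differentiating $(1-t^2P)^{\frac{n-4}{2}}$ produces a term carrying the factor $n-4$. For $n=4$ what remains is
\[
F_4'(t)=-\frac{t^3}{24}\sum_{j<k}\int_{T^4_{V_0}}\frac{(\gamma_j-\gamma_k)^2(2-\gamma_j^2-\gamma_k^2)}{(1+ts)^5}\,\mathrm dV<0,\qquad 0<t<1.
\]

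This also vindicates your coefficient program. Expand $(1+ts)^{-5}$ and drop odd powers of $s$ using the symmetry $s\mapsto-s$. The coefficient of $t^{2m+3}$ in $F_4'$ is then negative for every $m\ge0$, i.e.\ $c_k<0$ for all $k\ge2$. Getting there, however, requires this integration-by-parts structure; the raw multinomial moments do not exhibit it.
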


\begin{proof}
The equation \eqref{eq-Ft} follows directly by the definition of $\Phi_{a}$.   

Next we need to consider its derivative.
To this end, set $s\triangleq\langle f,v^{\sharp}\rangle$.  
Then  
$$F_n(t) =\int_{T_{V_0}^n}
\left(1-t^2\left(|\nabla s|^2+s^2\right)\right)^{\frac n2}
{\left(1+t s\right)^{-n}}
\,\mathrm dV= \int_{T_{V_0}^n} (1-t^2P)^{\frac n2} (1+ts)^{-n} \,\mathrm dV,$$
where $P=|\nabla s|^2+s^2$. 

Differentiating $F_n(t)$ with respect to $t$ 
yields 
$$F_n'(t) = -n \int_{T_{V_0}^n} \left(1-t^2 P\right)^{\frac n2-1} (1+ts)^{-n-1} \left(s + tP\right) \,\mathrm dV.$$
By decomposing the inner expression as $s + tP = s(1+ts) + t|\nabla s|^2$ and invoking the condition $\Delta s = -ns$, we obtain 
$$F_n'(t)=\int_{T_{V_0}^n} \left(1-t^2 P\right)^{\frac n2-1} (1+ts)^{-n} \Delta s \,\mathrm dV-n t\int_{T_{V_0}^n} \left(1-t^2 P\right)^{\frac n2-1} (1+ts)^{-n-1}|\nabla s|^2  \,\mathrm dV.$$ 
Applying the divergence theorem to the Laplacian term, we get
$$\begin{aligned}\int_{T_{V_0}^n} \left(1-t^2 P\right)^{\frac n2-1} (1+ts)^{-n} \Delta s \,\mathrm dV &= - \int_{T_{V_0}^n} \left\langle \nabla \left[ \left(1-t^2 P\right)^{\frac n2-1} (1+ts)^{-n} \right], \nabla s \right\rangle \,\mathrm dV\\
&=\left(\frac n2 - 1\right) t^2 \int_{T_{V_0}^n} \left(1-t^2 P\right)^{\frac n2-2} (1+ts)^{-n} \langle \nabla P, \nabla s \rangle \,\mathrm dV \\
&\quad + n t \int_{T_{V_0}^n} \left(1-t^2 P\right)^{\frac n2-1} (1+ts)^{-n-1} |\nabla s|^2 \,\mathrm dV.
\end{aligned}$$
It follows that  
$$F_n'(t) = \left(\frac n2 - 1\right) t^2 \int_{T^n} \left(1-t^2 P\right)^{\frac n2-2} (1+ts)^{-n} \langle\nabla P, \nabla s\rangle \,\mathrm dV.$$

Set $\gamma_k=\cos\sqrt n x_k.$ Then $\nabla\gamma_1, \dots, \nabla\gamma_n$ are orthogonal to each other, and 
$$\vert{}\nabla \gamma_k\vert{}^2 = n(1-\gamma_k^2),~~~1\leq k\leq n.$$
It follows that $|\nabla s|^2=\frac 1 {n^2}\sum_{k=1}^n\vert{}\nabla \gamma_k\vert{}^2 =\frac 1 {n}\sum_{k=1}^n (1-\gamma_k^2)$, and 
$$\begin{aligned}\langle \nabla P, \nabla s \rangle 
&= \frac{2}{n^2} \sum_{k=1}^n (s-\gamma_k) \vert{}\nabla \gamma_k\vert{}^2=\frac{2}{n^2} \sum_{k=1}^n \sum_{j=1}^n(\gamma_j-\gamma_k)(1-\gamma_k^2)\\
&=\frac{2}{n^2}\sum_{j<k}^n (\gamma_j-\gamma_k)(\gamma_j^2-\gamma_k^2) =\frac{2}{n^2}\sum_{j<k}^n (\gamma_j-\gamma_k)^2(\gamma_j+\gamma_k) \end{aligned}.$$
Hence, we have 
\begin{equation}\label{eq-mobius-Fn-prime-grad}
    F_n'(t) = \frac {n-2}{n^2}  t^2 \sum_{j<k}^n  \int_{T_{V_0}^n} \left(1-t^2 P\right)^{\frac {n-4}{2}} (1+ts)^{-n} (\gamma_j-\gamma_k)^2(\gamma_j+\gamma_k) \,\mathrm dV.
\end{equation}

Observe that for every smooth function $G=G(\gamma_1,\cdots,\gamma_n)$ on $T^n_{V_0}$ , we have
\begin{equation}\label{eq-mobius-gamma-integration}
\int_{T_{V_0}^n}\gamma_iG\,\mathrm dV
=
\int_{T_{V_0}^n}
(1-\gamma_i^2)\frac{\partial G}{\partial\gamma_i}
\,\mathrm dV.
\end{equation}

Applying (\ref{eq-mobius-gamma-integration}) to
(\ref{eq-mobius-Fn-prime-grad}) and simplifying, we obtain
 \begin{equation}\label{eq-mobius-Fn-prime-expanded}
F_n'(t)=
\frac{(n-4)(n-2)}{3n^3}t^4
\sum_{j<k}^n
\int_{T_{V_0}^n}
K_{j,k}
\,\mathrm dV-\frac{n-2}{3n^2}t^3
\sum_{j<k}^n
\int_{T_{V_0}^n}
W_{j,k}\mathrm dV.
\end{equation}
with
\[K_{j,k}:=(1-t^2P)^{\frac {n-6}{2}}
(1+ts)^{-n}
(\gamma_j-\gamma_k)^2\cdot
\left[
(1-\gamma_j^2)(\gamma_j-s)
+
(1-\gamma_k^2)(\gamma_k-s)
\right]\]
and 
\[W_{j,k}:=(1-t^2P)^{\frac {n-4}{2}}
(1+ts)^{-n-1}\cdot
(\gamma_j-\gamma_k)^2
(2-\gamma_j^2-\gamma_k^2)
\,\]
Substituting  $n=4$ into (\ref{eq-mobius-Fn-prime-expanded}), we obtain
$$
F_4'(t)
=
-\frac{t^3}{24}\sum_{j<k}^n
\int_{T_{V_0}^n}
\frac{
(\gamma_j-\gamma_k)^2
(2-\gamma_j^2-\gamma_k^2)
}{
(1+ts)^5
}
\,\mathrm dV.
$$
For every $t\in[0,1)$,
$$
\frac{
(\gamma_j-\gamma_k)^2
(2-\gamma_j^2-\gamma_k^2)
}{
(1+ts)^5
}
\geq0.
$$
Therefore,
$
F_4'(t)<0$  when $ 0<t<1$, that is,  $F_4(t)$ is strictly decreasing on $[0,1)$.
\end{proof}
{
\begin{proposition}
 Under the hypothesis of Proposition~\ref{lemma-F4}, we have
\begin{equation}
\lim_{t\to 1^-} F_n(t)=\omega_n+J_n,
\label{eq:boundary-limit}
\end{equation}
where
$
J_n=\int_{T_{V_0}^n}\widetilde\rho\,\mathrm dV$ and $\widetilde\rho
=
\left(
\frac{\sqrt{1-P}}{1+s}
\right)^n
$.
\end{proposition}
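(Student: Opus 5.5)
The plan is to split $F_n(t)$ into a part that converges to $J_n$ by dominated convergence and a part near a single point that concentrates and produces a round sphere worth $\omega_n$. Write $s=\langle f,v^\sharp\rangle=\frac1n\sum_k\gamma_k$ and $P=|\nabla s|^2+s^2=|(v^\sharp)^\top|^2+\langle f,v^\sharp\rangle^2\le |v^\sharp|^2=1$, so $0\le 1-P\le1$. Also $1+s\ge0$, with equality exactly at the single point $p_0$ where all $\gamma_k=-1$, i.e.\ $f(p_0)=-v^\sharp$. Hence the integrand of \eqref{eq-Ft} converges pointwise to $\widetilde\rho$ on $T^n_{V_0}\setminus\{p_0\}$ as $t\to1^-$, and the only difficulty is the behaviour near $p_0$, which the Möbius map $\Phi_{\varepsilon v^\sharp}$ blows up as $\varepsilon\to1$.

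\textbf{Step 1 (local expansions at $p_0$).} In normal coordinates $y$ centred at $p_0$ we have $\gamma_k=-1+\tfrac n2y_k^2+O(|y|^4)$. This gives
\[
1+s=\tfrac12|y|^2+O(|y|^4),\qquad |\nabla s|^2=|y|^2+O(|y|^4),\qquad s^2=1-|y|^2+O(|y|^4),
\]
so the second-order terms cancel and $1-P=O(|y|^4)$. Consequently $\widetilde\rho=\bigl(\sqrt{1-P}/(1+s)\bigr)^n$ is bounded near $p_0$, hence $\widetilde\rho$ is bounded on the whole torus and $J_n<\infty$.

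\textbf{Step 2 (away from $p_0$).} Fix a small $\delta>0$. On $T^n_{V_0}\setminus B_\delta(p_0)$ we have $1+ts\ge c(\delta)>0$ uniformly for $t\in[\tfrac12,1)$. Dominated convergence then gives
\[
\int_{T^n_{V_0}\setminus B_\delta}(1-t^2P)^{n/2}(1+ts)^{-n}\,\mathrm dV\;\longrightarrow\;\int_{T^n_{V_0}\setminus B_\delta}\widetilde\rho\,\mathrm dV .
\]

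\textbf{Step 3 (inside $B_\delta$, the main obstacle).} Write
\[
1-t^2P=(1-t^2)+t^2(1-P),\qquad 1+ts=(1-t)+t(1+s).
\]
For the leading term, replace these by $2(1-t)$ and $(1-t)+\tfrac12|y|^2$ respectively, and rescale $y=\sqrt{2(1-t)}\,z$. This gives
\[
\int_{B_\delta}\frac{(2(1-t))^{n/2}}{\bigl((1-t)+\tfrac12|y|^2\bigr)^n}\,\mathrm dy
=2^n\!\int_{|z|<\delta/\sqrt{2(1-t)}}\frac{\mathrm dz}{(1+|z|^2)^n}
\;\longrightarrow\;2^n\!\int_{\mathbb R^n}\frac{\mathrm dz}{(1+|z|^2)^n}=\omega_n,
\]
the last integral being the volume of the unit sphere pulled back by stereographic projection.

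The hard part is controlling the errors uniformly in $t$. First I would use elementary two-sided bounds such as
\[
\max\{a,b\}^{n/2}\le(a+b)^{n/2}\le 2^{n/2-1}\bigl(a^{n/2}+b^{n/2}\bigr)
\]
to separate the $(1-t^2)$ part from the $t^2(1-P)$ part. The $t^2(1-P)$ part, divided by $(1+ts)^n\ge (t(1+s))^n$, is dominated by a multiple of $\widetilde\rho$, so its contribution over $B_\delta$ is $O(\delta^n)$. The cross terms need separate care: for instance, for $n\ge2$ one has $(a+b)^{n/2}-a^{n/2}\le C\,b\,(a+b)^{n/2-1}$, and after the same rescaling these terms are bounded by powers of $\delta$ or of $(1-t)$. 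Second, the replacement $1+s=\tfrac12|y|^2(1+O(|y|^2))$, together with the volume element $\mathrm dV=(1+O(|y|^2))\,\mathrm dy$, changes the rescaled integral only by a factor $1+O(\delta^2)$. Hence
\[
\lim_{t\to1^-}\int_{B_\delta}(\cdots)\,\mathrm dV=\omega_n\,(1+O(\delta^2))+O(\delta^n).
\]

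\textbf{Step 4 (conclusion).} Adding Steps 2 and 3, and noting that $\int_{B_\delta}\widetilde\rho\,\mathrm dV\to0$ as $\delta\to0$ because $\widetilde\rho$ is bounded, we let $t\to1^-$ first and then $\delta\to0$. This yields \eqref{eq:boundary-limit}. The geometric interpretation is that the Möbius images converge to the limiting torus, with density $\widetilde\rho$, together with a round $n$-sphere bubbling off at $p_0$.
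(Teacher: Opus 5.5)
Your proposal is correct and follows essentially the paper's route: you split off a small ball around the unique point where $1+s=0$, pass to the limit outside it by uniform/dominated convergence, and rescale $y=\sqrt{2(1-t)}\,z$ inside it to extract the spherical bubble $2^n(1+|z|^2)^{-n}\,\mathrm dz$ of mass $\omega_n$. The differences are only in execution: the paper bounds $\widetilde\rho$ globally via $1-P\le(n-1)(1+s)^2$ and controls the inner region with the single majorant $L_5\sigma^{n/2}(\sigma+|y|^2)^{-n}+L_5$ plus a liminf/limsup argument, whereas you compare with an explicit model integrand and estimate the cross terms by hand (your statement $\lim=\omega_n(1+O(\delta^2))+O(\delta^n)$ should be read as bounds on the liminf and limsup).
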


\begin{proof}
Set $
p=
\left(
\frac{\pi}{\sqrt n},\ldots,\frac{\pi}{\sqrt n}
\right),
$ and $
\rho_t(x)
=
\frac{(1-t^2P(x))^{n/2}}{(1+ts(x))^n}$. As $t\to1^-$, one has $\rho_t(x)\to\widetilde\rho(x)$ for every
$x\neq p$, whereas $1+s(p)=0$. We therefore decompose
\[
F_n(t)
=
\int_{T_{V_0}^n\setminus B_r(p)}
\rho_t\,\mathrm dV
+
\int_{B_r(p)}
\rho_t\,\mathrm dV,
\]
where $B_r(p)$ denotes the geodesic ball of radius $r$ centered at $p$
with respect to the flat metric $g_0$ on $T_{V_0}^n$.

We first consider the integral away from $p$. By the computation in
Proposition~\ref{lemma-F4},
\[
1-P
=
\frac{1}{n}\sum_{i=1}^n\gamma_i^2-s^2
=
\frac{1}{n}\sum_{i=1}^n(1+\gamma_i)^2-(1+s)^2.
\]
Since $1+\gamma_i\geq0$, it follows that
\[
\begin{aligned}
1-P
\leq
\frac{1}{n}
\left(
\sum_{i=1}^n(1+\gamma_i)
\right)^2
-(1+s)^2
=
\frac{1}{n}(n+ns)^2-(1+s)^2
=
(n-1)(1+s)^2.
\end{aligned}
\]
Together with $1-P\geq0$, this gives
\[
0\leq\widetilde\rho\leq(n-1)^{n/2}
\qquad
\text{on }T_{V_0}^n\setminus\{p\}.
\]
In particular, $J_n<\infty$ and $
\lim_{r\to0}
\int_{B_r(p)}
\widetilde\rho\,\mathrm dV
=0$.

For every fixed $r>0$, the convergence $\rho_t\to\widetilde\rho$ is
uniform on $T_{V_0}^n\setminus B_r(p)$. Hence
\[
\lim_{t\to1^-}
\int_{T_{V_0}^n\setminus B_r(p)}
\rho_t\,\mathrm dV
=
\int_{T_{V_0}^n\setminus B_r(p)}
\widetilde\rho\,\mathrm dV.
\]
Letting $r\to0$, we obtain
\begin{equation}
\lim_{r\to0}
\int_{T_{V_0}^n\setminus B_r(p)}
\widetilde\rho\,\mathrm dV
=
J_n.
\label{eq:regular-part}
\end{equation}

We next analyze the integral over $B_r(p)$. Let $x=p+y$, then $\gamma_i(p+y)=-\cos(\sqrt n\,y_i)$.
The Taylor expansion at $y=0$ yields
\begin{equation}
1+s(p+y)
=
\frac{1}{2}|y|^2+O(|y|^4),
\label{eq:taylor-s}
\end{equation}
\begin{equation}
1-P(p+y)=O(|y|^4).
\label{eq:taylor-P}
\end{equation}
Consequently, when $|y|<r_0$, there exist $r_0>0$ and constants $L_1,L_2>0$ such
that
\begin{equation}
L_1|y|^2
\leq
1+s(p+y)
\leq
L_2|y|^2,
\label{eq:s-local-bound}
\end{equation}
\begin{equation}
0\leq1-P(p+y)\leq L_2|y|^4.
\label{eq:P-local-bound}
\end{equation}

Set $\sigma=1-t$. By (\ref{eq:s-local-bound}) and
(\ref{eq:P-local-bound}), for $t$ sufficiently close to $1$, there
exist constants $L_3,L_4>0$ such that
\[
\begin{aligned}
1+ts(p+y)
=
\sigma+t\bigl(1+s(p+y)\bigr)
\geq
L_3\bigl(\sigma+|y|^2\bigr),
\end{aligned}
\]
\[
\begin{aligned}
1-t^2P(p+y)=
1-t^2+t^2\bigl(1-P(p+y)\bigr)
\leq
L_4\bigl(\sigma+|y|^4\bigr).
\end{aligned}
\]
For some constant $L_5>0$, it follows that
\begin{equation}
\rho_t(p+y)
\leq
L_5
\frac{\sigma^{n/2}}
     {(\sigma+|y|^2)^n}
+
L_5.
\label{eq:rho-local-bound}
\end{equation}

We now introduce the rescaling $y=\sqrt{2\sigma}\,z$. For every fixed $\widetilde r>0$ and $z\in B_{\widetilde r}(0)$,
equations (\ref{eq:taylor-s}) and (\ref{eq:taylor-P})give
\[
\begin{aligned}
1+ts\bigl(p+\sqrt{2\sigma}\,z\bigr)
=
\sigma
+
(1-\sigma)
\left(
\sigma|z|^2+O(\sigma^2)
\right)
=
\sigma
\left(
1+|z|^2+O(\sigma)
\right),
\end{aligned}
\]
\[
\begin{aligned}
1-t^2P\bigl(p+\sqrt{2\sigma}\,z\bigr)
=
\sigma(2-\sigma)
+
(1-\sigma)^2O(\sigma^2)=
2\sigma\bigl(1+O(\sigma)\bigr).
\end{aligned}
\]
Hence, by \eqref{eq:rho-local-bound} and the dominated convergence theorem, we have
\begin{equation}\label{eq-core-limit}
\lim_{t\to1^-}
\int_{B_{\sqrt{2\sigma}\widetilde r}(p)}
\rho_t\,\mathrm dV
=
\lim_{t\to1^-}
\int_{B_{\widetilde r}(0)}
(2\sigma)^{n/2}
\rho_t\bigl(p+\sqrt{2\sigma}\,z\bigr)
\,\mathrm dz =
\int_{B_{\widetilde r}(0)}
\frac{2^n}{(1+|z|^2)^n}
\,\mathrm dz.
\end{equation}
The limiting density $2^n(1+|z|^2)^{-n}\dd z$ is precisely the volume form of the unit $n$-sphere in stereographic coordinates. Thus, the formula \eqref{eq-core-limit} describes a single spherical bubble forming at $p$, carrying total mean-curvature mass $\omega_n$.

Moreover, for every fixed $\widetilde r>0$, when $t$ is sufficiently close to $1$, one has $B_{\sqrt{2\sigma}\widetilde r}(p)\subset B_r(p)$.
 Since $\rho_t\geq0$,
equation (\ref{eq-core-limit}) implies
\[
\liminf_{t\to1^-}
\int_{B_r(p)}
\rho_t\,\mathrm dV
\geq
\int_{B_{\widetilde r}(0)}
\frac{2^n}{(1+|z|^2)^n}
\,\mathrm dz.
\]
Letting $\widetilde r\to\infty$ and using the stereographic projection
formula, we obtain
\begin{equation}
\liminf_{t\to1^-}
\int_{B_r(p)}
\rho_t\,\mathrm dV
\geq
\int_{\mathbb R^n}
\frac{2^n}{(1+|z|^2)^n}
\,\mathrm dz
=
\omega_n.
\label{eq:local-liminf}
\end{equation}

On the other hand, fix $\widetilde r>1$ and $0<r<r_0$. By (\ref{eq:rho-local-bound}), when $t$ is sufficiently close to $1$, we obtain
\[
\begin{aligned}
\int_{B_r(p)\setminus
B_{\sqrt{2\sigma}\widetilde r}(p)}
\rho_t\,\mathrm dV
\leq
L_5
\int_{|z|\geq\widetilde r}
\frac{2^{n/2}}{(1+|z|^2)^n}
\,\mathrm dz
+
L_5r^n.
\end{aligned}
\]
Combining this estimate with 
(\ref{eq-core-limit}), we find
\[
\begin{aligned}
\limsup_{t\to1^-}
\int_{B_r(p)}
\rho_t\,\mathrm dV
\leq{}
\int_{B_{\widetilde r}(0)}
\frac{2^n}{(1+|z|^2)^n}
\,\mathrm dz+
L_5
\int_{|z|\geq\widetilde r}
\frac{2^{n/2}}{(1+|z|^2)^n}
\,\mathrm dz
+
L_5r^n.
\end{aligned}
\]
Letting $\widetilde r\to\infty$ yields
\begin{equation}
\limsup_{t\to1^-}
\int_{B_r(p)}
\rho_t\,\mathrm dV
\leq
\omega_n+L_5r^n.
\label{eq:local-limsup}
\end{equation}

Combining (\ref{eq:local-liminf}) and (\ref{eq:local-limsup}) with the
convergence away from $p$, we obtain
\[
\begin{aligned}
\int_{T_{V_0}^n\setminus B_r(p)}
\widetilde\rho\,\mathrm dV+\omega_n
\leq
\liminf_{t\to1^-}F_n(t)\leq
\limsup_{t\to1^-}F_n(t)\leq
\int_{T_{V_0}^n\setminus B_r(p)}
\widetilde\rho\,\mathrm dV
+
\omega_n
+
L_5r^n.
\end{aligned}
\]
Finally, letting $r\to0$ and applying (\ref{eq:regular-part}), we conclude
that
\[
\lim_{t\to1^-}F_n(t)
=
J_n+\omega_n.
\]
This proves (\ref{eq:boundary-limit}).
\end{proof}
\begin{remark}\rm
Geometrically, as $\varepsilon$ approaches $1$ (equivalently, $t \to 1$), the mapping 
$\Phi_{\varepsilon v^{\sharp}} \circ f$ undergoes a conformal blow-up, resulting in the bubbling off of an $n$-dimensional sphere. The limit
\begin{equation}
    \lim_{t \to 1^-} F_n(t) = J_n + \omega_n
\end{equation}
indicates that the total mean curvature functional exhibits bubbling and energy quantization along this one-parameter conformal deformation. Here, $J_n$ denotes the total mean curvature of the noncompact Euclidean submanifold obtained by stereographically projecting the Clifford $n$-torus from the pole $-v^\sharp$ onto $\mathbb{R}^{2n-1}$, and $\omega_n$ is the quantized energy contributed by a single standard spherical bubble formed at $p$.
\end{remark}

\begin{remark}\rm
To compare the boundary value with the energy of the original Clifford $n$-torus and with Chen's lower bound $\omega_n$ in \cite{Chen1971TotalI}, define
$$
\mathcal{F}_n
:=
\frac{\displaystyle\lim_{t\to1^-}F_n(t)}
{F_n(0)}
=
\frac{\omega_n+J_n}
{\left(2\pi/\sqrt n\right)^n},\qquad \mathcal{R}_n
:\frac{\displaystyle\lim_{t\to1^-}F_n(t)}
{\omega_n}=\frac{\omega_n+J_n}{\omega_n}.
$$
   We can carry out numerical computations and obtain the following table.
\begin{table}[H]
    \centering
    \begin{tabular}{c|c|c|c|c|c}
        $n$ & $\omega_n$ & $\omega_n+J_n$ & $F_n(0)$ & $\mathcal{F}_n$& $\mathcal{R}_n$
\\ \hline
3 & 19.739 &40.425&47.737& 0.847 &2.048 \\
4 & 26.319 &71.779&97.409& 0.737 &2.727 \\
5 & 31.006 &115.700&175.176& 0.660 & 3.732\\
6  & 33.073&172.327 &284.856&0.605&5.211\\
7  & 32.470&239.399&426.006& 0.562&7.373  \\
8 & 29.687 &312.176&593.033&0.526 & 10.516\\
9  & 25.502&384.049&775.403& 0.495 &15.060 \\
10 & 20.725&447.816 &958.956&0.467&21.608
    \end{tabular}
     \caption{Numerical comparison}
    \label{tab1}
\end{table}


\end{remark}}

\appendix\section{Minimal Submanifolds in $\SB^N$ under M\"{o}bius transformations}
\begin{lemma}\label{eq-minimal}
Let $f: M^n \rightarrow \SB^N$ be a closed minimal submanifold. For any constant unit vector $v \in \mathbb{R}^{N+1}$, let $v^\top$ and $v^\bot$ denote the tangential and normal projections of $v$ to $M$, respectively, and let $B$ denote the second fundamental form of ${M^n}$ in $\SB^N$. Then the following integral identities hold:
\begin{align}
     \int_{M^n} |v^\top|^2 \,\mathrm dV& = n \int_{M^n} \langle f, v \rangle^2 \, \mathrm dV \label{eq-2ord}\\
     \int_{M^n}\langle f, v \rangle |v^\top|^2 \, \mathrm{d}V &= \frac{n}{2} \int_{M^n} \langle f, v \rangle^3 \, \mathrm{d}V \label{eq-3ord}\\
    \int_{M^n} \langle f, v \rangle^2 |v^\top|^2 \, \mathrm{d}V &= \frac{n}{3} \int_{M^n} \langle f, v \rangle^4 \, \mathrm{d}V \label{eq-4ord1}\\ 
    \int_{M^n}|v^\top|^4 \, \mathrm{d}V &= \frac{n(n+2)}{3} \int_{M^n} \langle f, v \rangle^4 \, \mathrm{d}V - 2 \int_{M^n} \langle f, v \rangle \langle B(v^\top, v^\top), v^\bot \rangle \, \mathrm{d}V \label{eq-4ord2}
\end{align}
\end{lemma}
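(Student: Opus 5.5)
The plan is to derive all four identities by integrating divergences of suitable tangent vector fields built from the height function $s=\langle f,v\rangle$, using minimality. The basic facts are $\nabla s=v^\top$ and, since $f$ is minimal in $\SB^N$, $\Delta s=-ns$. More precisely, the Hessian of $s$ is
\[
\nabla^2 s(X,Y)=-s\langle X,Y\rangle+\langle B(X,Y),v^\bot\rangle ,
\]
and its trace gives $\Delta s=-ns$ because the trace of $B$ vanishes. Throughout, $|v^\top|^2=|\nabla s|^2$, and closedness of $M^n$ kills all divergence terms.

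The first three identities come from integrating $\operatorname{div}(\phi(s)\nabla s)=\phi'(s)|\nabla s|^2+\phi(s)\Delta s$, which gives
\[
\int_{M^n}\phi'(s)|\nabla s|^2\,\mathrm dV=n\int_{M^n}\phi(s)s\,\mathrm dV .
\]
Taking $\phi=s$, $\phi=s^2/2$ and $\phi=s^3/3$ yields \eqref{eq-2ord}, \eqref{eq-3ord} and \eqref{eq-4ord1} respectively.

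For \eqref{eq-4ord2}, I will integrate the divergence of $|\nabla s|^2\nabla s$:
\[
\operatorname{div}\bigl(|\nabla s|^2\nabla s\bigr)=|\nabla s|^2\Delta s+2\nabla^2 s(\nabla s,\nabla s).
\]
Substituting the Hessian formula gives
\[
2\nabla^2 s(\nabla s,\nabla s)=-2s|v^\top|^2+2\langle B(v^\top,v^\top),v^\bot\rangle .
\]
Integrating, and using $\Delta s=-ns$, yields
\[
0=-(n+2)\int_{M^n} s|v^\top|^2\,\mathrm dV+2\int_{M^n}\langle B(v^\top,v^\top),v^\bot\rangle\,\mathrm dV .
\]
This has the wrong degree: the integrand is cubic in $v$, while \eqref{eq-4ord2} is quartic. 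So I will instead use the quartic vector field $s|\nabla s|^2\nabla s$:
\[
\operatorname{div}\bigl(s|\nabla s|^2\nabla s\bigr)=|\nabla s|^4+s|\nabla s|^2\Delta s+2s\nabla^2 s(\nabla s,\nabla s).
\]
Integrating gives
\[
\int_{M^n}|v^\top|^4\,\mathrm dV=(n+2)\int_{M^n}s^2|v^\top|^2\,\mathrm dV-2\int_{M^n}s\langle B(v^\top,v^\top),v^\bot\rangle\,\mathrm dV .
\]
Inserting \eqref{eq-4ord1} turns the first term into $\frac{n(n+2)}{3}\int_{M^n} s^4\,\mathrm dV$, which is exactly \eqref{eq-4ord2}.

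The main obstacle is getting the Hessian formula and its sign conventions right. I will check it by differentiating $\langle f,v\rangle$ twice in $\mathbb R^{N+1}$. The ambient second fundamental form of $M^n\subset\mathbb R^{N+1}$ is $B(X,Y)-\langle X,Y\rangle f$, so $\nabla^2 s(X,Y)=\langle B(X,Y)-\langle X,Y\rangle f,\,v\rangle$. Here $\langle B(X,Y),v\rangle=\langle B(X,Y),v^\bot\rangle$, since $B$ takes values in the normal bundle of $M^n$ in $\SB^N$, which is orthogonal to both $f$ and $TM$. The remaining steps are routine bookkeeping.
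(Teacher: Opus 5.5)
Your proof is correct and follows essentially the same route as the paper: you obtain the first three identities by integrating $\operatorname{div}(\phi(s)\nabla s)$ with $\Delta s=-ns$, and the fourth from the divergence of the same vector field $s|\nabla s|^2\nabla s$, combined with the Hessian formula $\nabla^2 s(X,Y)=-s\langle X,Y\rangle+\langle B(X,Y),v^\bot\rangle$ and \eqref{eq-4ord1}. The preliminary computation with $|\nabla s|^2\nabla s$ is a harmless detour, and you correctly discard it.
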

\begin{proof}
Set $u(x) \triangleq \langle f(x), v \rangle$.
Since $v$ is a constant vector, the gradient of $u$ on ${M^n}$ is exactly the tangential projection of $v$, i.e., $\nabla u = v^\top$, which implies $|\nabla u|^2 = |v^\top|^2$.
By Takahashi's theorem, the Laplacian of $u$ takes the form 
\[ 
\Delta u = \langle \Delta f, v \rangle = \langle -n f, v \rangle = -n u. 
\]

For any positive integer $k$, using $\nabla(u^k) = ku^{k-1} \nabla u$ and 
\[ 
\int_{M^n} u^k \Delta u \, \mathrm{d}V = -\int_{M^n} \langle \nabla(u^k), \nabla u \rangle \, \mathrm{d}V,
\]
we obtain 
\[ 
\int_{M^n} u^{k-1} |\nabla u|^2 \, \mathrm{d}V=\frac{n}{k}\int_{M^n} u^{k+1}\, \mathrm{d}V, 
\]
from which \eqref{eq-2ord} $\sim$ \eqref{eq-4ord1} follows. 

Consider the vector field $X = u |\nabla u|^2 \nabla u$. The divergence of $X$ is
\[ 
\operatorname{div}(X) = \langle \nabla(u |\nabla u|^2), \nabla u \rangle + u |\nabla u|^2 \Delta u. 
\]
Expanding the gradient term, we get $\nabla(u |\nabla u|^2) = |\nabla u|^2 \nabla u + u \nabla |\nabla u|^2$. Thus,
\[ 
\operatorname{div}(X) = |\nabla u|^4 + u \langle \nabla |\nabla u|^2, \nabla u \rangle - n u^2 |\nabla u|^2. 
\]
Integrating over ${M^n}$ and applying the divergence theorem $\int_{M^n} \operatorname{div}(X) \, \mathrm{d}V = 0$, we obtain
\[ 
\int_{M^n} |\nabla u|^4 \, \mathrm{d}V = n \int_{M^n} u^2 |\nabla u|^2 \, \mathrm{d}V - \int_{M^n} u \langle \nabla |\nabla u|^2, \nabla u \rangle \, \mathrm{d}V. 
\]

To compute the term $\langle \nabla |\nabla u|^2, \nabla u \rangle$, let $Y$ and $Z$ be tangent vector fields on ${M^n}$, recall that $\frac{1}{2} \langle \nabla |\nabla u|^2, Y \rangle = \operatorname{Hess}(u)(\nabla u, Y)$. For the submanifold ${M^n} \subset \mathbb{S}^N \subset \mathbb{R}^{N+1}$, the Gauss formula yields the Hessian of the restriction of a linear function, 
\[ 
\operatorname{Hess}(u)(Y, Z) = -u \langle Y, Z \rangle + \langle B(Y, Z), v \rangle=-u \langle Y, Z \rangle + \langle B(Y, Z), v^\perp \rangle, 
\]
where $B$ is the second fundamental form of ${M^n}$ in $\SB^N$. 
Therefore,
\[ 
\langle \nabla |\nabla u|^2, \nabla u \rangle = 2 \operatorname{Hess}(u)(\nabla u, \nabla u) = -2u |\nabla u|^2 + 2\langle B(\nabla u, \nabla u), v^\bot \rangle. 
\]
Substituting this back into the integral equation gives
\begin{align*}
\int_{M^n} |\nabla u|^4 \, \mathrm{d}V &= n \int_{M^n} u^2 |\nabla u|^2 \, \mathrm{d}V - \int_{M^n} u \left( -2u |\nabla u|^2 + 2\langle B(\nabla u, \nabla u), v^\bot \rangle \right) dV \\
&= (n+2) \int_{M^n} u^2 |\nabla u|^2 \, \mathrm{d}V - 2 \int_{M^n} u \langle B(\nabla u, \nabla u), v^\bot \rangle \, \mathrm{d}V.
\end{align*}
Using the result from (\ref{eq-4ord1}) that $\int_{M^n} u^2 |\nabla u|^2 \, \mathrm{d}V = \frac{n}{3} \int_{M^n} u^4 \, \mathrm{d}V$, we conclude
\[ 
\int_{M^n} |v^\top|^4 \, \mathrm{d}V = \frac{n(n+2)}{3} \int_{M^n} \langle f, v \rangle^4 \, \mathrm{d}V - 2 \int_{M^n} \langle f, v \rangle \langle B(v^\top, v^\top), v^\bot \rangle \, \mathrm{d}V. 
\]
This completes the proof.
\end{proof}

$\mathbf{Declaration~ on~ the~ use~of~ AI}$: 
During the revision of this manuscript, ChatGPT 5.6 Sol was used to generate preliminary versions of certain arguments in Lemma 3.2, Remark 4.6, and Proposition 5.5, to identify possible gaps, and to assist with language editing. All AI-generated material was independently checked, substantially revised, and rewritten by the authors, who take full responsibility for all mathematical content.\\

$\mathbf{Acknowledgement}$: The second named author is  supported by the Natural Science Foundation of Fujian Province of China (Grant No. 2026J002028) and  NSFC (Grant No. 12371052). The third named author is supported by NSFC (Grant No. 12171473) and the Fundamental Research Funds for Central Universities. \hfill



\def\refname

\end{document}